\newtheorem{thm}{Theorem}
\newtheorem{lem}[thm]{Lemma}
\newtheorem{prop}[thm]{Proposition}
\newtheorem{coro}[thm]{Corollary}
\newtheorem{rem}[thm]{Remark}
\def\R{\mathbb R}
\def\N{\mathbb N}
\def\cC{\mathcal C}
\def\div{\operatorname{div }}
\def\supp{\operatorname{supp}}
\def\lz{\lambda}
\def\ez{\epsilon}
\def\Exp{\operatorname{Exp}}
\title{\sc{Linear transport equations for vector fields \\
with subexponentially integrable divergence}
\footnotetext{\hspace{-0.35cm}
2010 \emph{Mathematics Subject Classification}. Primary 35F05; Secondary 35F10.
\endgraf
{\it Key words and phrases}. transport equation, uniqueness, stability, divergence, $BV$-vector fields
}}
\author{\it Albert Clop, Renjin Jiang, Joan Mateu and Joan Orobitg}
\date{}
\begin{document}
\maketitle

\begin{abstract}We face the well-posedness of linear transport Cauchy problems
$$\begin{cases}\dfrac{\partial u}{\partial t} + b\cdot\nabla u + c\,u = f&(0,T)\times\R^n\\u(0,\cdot)=u_0\in L^\infty&\R^n\end{cases}$$
under borderline integrability assumptions on the divergence of the velocity field $b$. For $W^{1,1}_{loc}$ vector fields $b$ satisfying $\frac{|b(x,t)|}{1+|x|}\in L^1(0,T; L^1)+L^1(0,T; L^\infty)$ and
$$\div b\in  L^1(0,T; L^\infty) + L^1\left(0,T; \Exp\left(\frac{L}{\log L}\right)\right),$$
we prove existence and uniqueness of weak solutions. Moreover, optimality is shown in the following way: for every $\gamma>1$, we construct an example of a bounded autonomous velocity field $b$ with
$$\div(b)\in \Exp\left(\frac{L}{\log^\gamma L}\right)$$
for which the associate Cauchy problem for the transport equation admits infinitely many solutions. Stability questions and further extensions to the $BV$ setting are also addressed.
\end{abstract}

\section{Introduction}

\noindent
In this paper, we are concerned with the well-posedness (ill-posedness) of the Cauchy problem
of the transport equation
\begin{equation}\label{tran-eq}
\begin{cases}
\dfrac{\partial u}{\partial t}+b\cdot \nabla u=0 & (0,T)\times\R^n,\\
      u(0,\cdot)=u_0 &  \R^n.
   \end{cases}\end{equation}
Here $b\in L^1(0,T;W^{1,1}_{loc})$ or $b\in L^1(0,T;BV_{loc} )$,  and $u_0\in L^\infty$. A function $u\in L^1(0,T;L^1_{loc})$ is called a \textit{weak solution} to \eqref{tran-eq} if for each $\varphi\in \cC^\infty([0,T]\times\R^n)$ with compact support in  $[0,T)\times \R^n$ it holds that
\begin{equation*}
-\int_0^T\int_{\R^n} u\,\dfrac{\partial \varphi}{\partial t}\,dx\,dt-\int_{\R^n} u_0 \,\varphi(0,\cdot)\,dx-\int_0^T\int_{\R^n} u\,\div (b\,\varphi)\,dx\,dt=0.
\end{equation*}
We also say that the problem \eqref{tran-eq} is \emph{well posed} in $L^\infty(0,T;L^\infty)$ if weak solutions exist and are unique, for any $u\in L^\infty$. \\
\\
The classical method of characteristics describes, under enough smoothness of the velocity field $b$, the unique solution to \eqref{tran-eq} as the composition $u(t,x)=u_0(X(t,x))$, where $X(t,x)$ is the unique solution to the ODE
\begin{equation}\label{ODE}
\begin{cases}
\dfrac{d}{dt}\,X(t,x)=-b(t,X(t,x)),
\\X(0,x)=x.
\end{cases}
\end{equation}
When there is no smoothness, solutions of \eqref{ODE} are more delicate to understand. In the seminal work \cite{dl89}, DiPerna and Lions showed that for $b\in L^1(0,T;W^{1,1}_{loc})$ satisfying
\begin{equation}\label{hyp-b-1}
\frac{|b(t,x)|}{1+|x|}\in L^1(0,T;L^1)+L^1(0,T;L^\infty),
\end{equation}
and
$$\div b\in L^1(0,T;L^\infty),$$
the problem \eqref{tran-eq} is well-posed in $L^\infty(0,T;L^\infty)$. Moreover, the solution is renormalizable, i.e., for each $\beta\in \cC^1(\R)$,
$\beta(u)$ is the unique weak solution to the Cauchy problem
\begin{equation}\label{tran-eq-c}
\begin{cases}
\dfrac{\partial }{\partial t}\beta(u)+b\cdot \nabla \beta(u)=0& (0,T)\times \R^n,\\
      \beta(u)(0,\cdot)=\beta(u_0) & \R^n.
      \end{cases}
\end{equation}
Since that, the problem has been found many applications and has been generalized into different settings,
let us mention a few below. In \cite{d96}, Desjardins showed results of existence and uniqueness for linear
transport equations with discontinuous coefficients and velocity field having exponentially integrable divergence.
In a breakthrough paper, Ambrosio \cite{a04} extended the renormalization property to the setting of bounded
variation (or $BV$) vector fields. Cipriano-Cruzeiro \cite{cc05} found nice solutions of \eqref{ODE} for
vector fields with exponentially integrable divergence in the setting of Euclidean spaces equipped with Gauss
measures. Recently, Mucha \cite{mu10} established well-posedness for \eqref{tran-eq} with divergence of the
velocity field in $BMO$ with compact support. Also, Colombo, Crippa  and Spirito obtained at \cite{ccs}
the well-posedness of the Cauchy problem for the continuity equation with a velocity field whose divergence
is in $BMO$. See also \cite{ccs14} for the same equation with an integrable damping term. In \cite{acf14},
Ambrosio, Colombo and Figalli provide an analogy with the Cauchy-Lipschitz theory, by studying maximal
flows in the spirit of DiPerna-Lions, and using only local $L^\infty$ bounds on the divergence. For more
applications and generalizations, we refer to \cite{acfs09,af09,ccr06,cdl08,cl02,fl10,su14} and references therein.\\
\\
Our primary goal in this paper is to understand to which extent the condition $\div b\in L^1(0,T;L^\infty)$
can be relaxed so that the initial value problem \eqref{tran-eq} remains being well-posed
in $L^\infty(0,T;L^\infty)$. As it was already shown by DiPerna-Lions, the assumption
\begin{equation}\label{finiteq}
\div b\in L^1(0,T;L^q), \hspace{.3cm}\text{for some }q\in(1,\infty)
\end{equation}
is \emph{not} sufficient to guarantee uniqueness of solutions $X(t)$ of \eqref{ODE}.
As a consequence, uniqueness also fails for \eqref{tran-eq} under \eqref{finiteq}.
However, there is still some room left between $L^q$ and $L^\infty$, e.g., $BMO$
or even spaces of (sub-)exponentially integrable functions.\\
\\
Mucha \cite{mu10} recently obtained well-posedness of \eqref{tran-eq} in $L^\infty(0,T;L^\infty)$ for $W^{1,1}_{loc}$ vector fields $b$ such that $\frac{|b(t,x)|}{1+|x|}\in L^1(0,T;L^1),$
\begin{equation}\label{assumptionsubko}
\div b\in L^1(0,T;BMO),\hspace{.3cm}\text{and}\hspace{.3cm}\supp (\div b)\subset B(0,R)\text{   for some }R>0.
\end{equation}
Subko \cite{su14} further generalized Mucha's result by replacing $W^{1,1}_{loc}$ by
the local class $BV_{loc}$ of vector fields with compactly supported $BMO$ divergence.
{ In \cite{ccs}, Colombo, Crippa  and Spirito obtained the well-posedness of
the Cauchy problem for the continuity equation with a velocity field whose divergence
is a sum of a bounded function and a compactly supported $BMO$ function.}
A natural question arises here:  is the restriction on the support of $\div b$ necessary?
At the first sight, one may wonder whether well-posedness holds true if $\div b\in L^1(0,T; BMO)$
without any further restriction on the support. We do not know if this is true.
\\
\\
By using the John-Nirenberg inequality from \cite{jn61}, one sees that $BMO$ functions are locally exponentially integrable. Thus, assumption \eqref{assumptionsubko} easily gives that 
$$\div \,b\in  L^1(0,T;\Exp L),$$
where $\Exp L$ denotes the Orlicz space of globally exponentially integrable functions (see Section \ref{prelim} for a definition). Nevertheless, it is worth recalling here that no restriction on the support of $\div b$ is needed to get well-posedness if global boundedness is assumed for the divergence, namely $\div b\in L^1(0,T; L^\infty)$. Therefore, it seems reasonable to investigate if the condition
$$\div \,b\in L^1(0,T;L^\infty)+ L^1(0,T;\Exp L)$$
suffices to get well-posedness. Our first result gives a positive answer to this question. Indeed, we prove that an Orlicz space even larger than $\Exp L$ is sufficient for our purpose.

\begin{thm}\label{main}
Let $T>0$. Assume that  $b\in L^1(0,T;W^{1,1}_{loc})$ satisfying \eqref{hyp-b-1} and
\begin{equation}\label{hyp-b-2}
\div b\in L^1(0,T;L^\infty)+ L^1\left(0,T;\Exp\left(\frac{L}{\log L}\right)\right).
\end{equation}
Then for each $u_0\in L^\infty$ there exists a unique weak solution $u\in L^\infty(0,T;L^\infty)$ of the transport problem \eqref{tran-eq}.
\end{thm}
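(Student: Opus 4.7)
The plan is to follow the DiPerna--Lions scheme: derive the renormalization property from the classical commutator estimate, then reduce uniqueness to an Osgood-type differential inequality in which the borderline hypothesis \eqref{hyp-b-2} enters only at the very last step. Existence is the easier half: I would regularize $b$ by mollification and truncation to obtain smooth $b_\ez$ satisfying uniform versions of \eqref{hyp-b-1}--\eqref{hyp-b-2}, solve the smooth transport problem via characteristics, and extract a weak-$*$ limit in $L^\infty(0,T;L^\infty)$, which is a weak solution to \eqref{tran-eq} because $b_\ez \to b$ in $L^1_{loc}$.

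For renormalization, given a bounded weak solution $u$, set $u_\ez = u *_x \rho_\ez$ and write the mollified equation as
\begin{equation*}
\pa_t u_\ez + b\cdot \nabla u_\ez = r_\ez, \qquad r_\ez = b \cdot (u *_x \nabla\rho_\ez) - (b\cdot\nabla u) *_x \rho_\ez.
\end{equation*}
The classical commutator estimate gives $r_\ez \to 0$ in $L^1_{loc}((0,T)\times\R^n)$ for every $b\in L^1(0,T;W^{1,1}_{loc})$ and $u\in L^\infty$, using no information on $\div b$. Multiplying by $\beta'(u_\ez)$ for a bounded $\beta\in \cC^1(\R)$ and passing to the limit yields the renormalized equation for $\beta(u)$; the hypothesis \eqref{hyp-b-2} is needed only to make the integration by parts against $\div(b\,\varphi)$ meaningful in the weak formulation.

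By linearity, uniqueness reduces to showing $u\equiv 0$ whenever $u_0=0$. Fix a bounded $\beta\in \cC^1(\R)$ with $\beta(0)=0$ and $\beta>0$ off the origin, a radial cutoff $\phi_R(x)=\phi(x/R)$, and set $F_R(t):=\int \beta(u(t,x))\,\phi_R(x)\,dx$. Testing the renormalized equation against $\phi_R$ yields
\begin{equation*}
F_R'(t) \le \int \beta(u)\,b\cdot\nabla\phi_R\,dx + \int (\div b)\,\beta(u)\,\phi_R\,dx.
\end{equation*}
The growth hypothesis \eqref{hyp-b-1}, combined with $|\nabla\phi_R|\lesssim 1/R$ supported in $\{R\le|x|\le 2R\}$, makes the first integral a vanishing boundary term as $R\to\infty$. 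For the divergence integral I split $\div b=D_1+D_2$ as in \eqref{hyp-b-2}: the $L^\infty$ piece contributes the standard Gronwall term $\|D_1(t)\|_\infty F_R(t)$. For $D_2\in \Exp(L/\log L)$, the Orlicz--H\"older inequality against the complementary Young function $\Psi(s)\sim s\log s\,\log\log s$ bounds its contribution by $\|D_2(t)\|_{\Exp(L/\log L)}\,\|\beta(u)\phi_R\|_\Psi$. Because $\beta(u)\phi_R$ is bounded and compactly supported, a direct computation of the Luxemburg norm in terms of the $L^1$--quantity yields $\|\beta(u)\phi_R\|_\Psi \lesssim F_R(t)\log(1/F_R(t))\log\log(1/F_R(t))$, up to constants depending on $\|\beta(u)\|_\infty$. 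Combining everything produces an Osgood-type inequality
\begin{equation*}
F_R'(t) \le o_R(1) + g(t)\,\omega(F_R(t)), \qquad \omega(s):=s\log(1/s)\log\log(1/s),
\end{equation*}
with $g\in L^1(0,T)$. Since $\int_{0^+} ds/\omega(s)=+\infty$, Osgood's lemma forces $F_R\equiv 0$; letting $R\to\infty$ gives $\beta(u)\equiv 0$, hence $u\equiv 0$.

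The main obstacle is closing the final Orlicz estimate: the Orlicz--H\"older inequality gives a bound in the Luxemburg norm $\|\beta(u)\phi_R\|_\Psi$, a quantity a priori strictly larger than the $L^1$ functional $F_R(t)$ that a Gronwall argument can absorb. Converting this into the precise Osgood modulus $\omega(s)=s\log(1/s)\log\log(1/s)$ is exactly where the borderline nature of $\Exp(L/\log L)$ manifests itself: $\Psi(s)\sim s\log s\log\log s$ is the last Young function for which $\omega$ remains Osgood-integrable, which is consistent with the later counterexample showing sharpness for $\Exp(L/\log^\gamma L)$ with $\gamma>1$.
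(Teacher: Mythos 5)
Your overall architecture (mollify for existence, renormalize, turn the divergence estimate into an Osgood inequality via Orlicz--H\"older against $\Psi(s)\sim s\log s\,\log\log s$) is the right strategy, and the Osgood modulus $\omega(s)=s\log(1/s)\log\log(1/s)$ matching $\Exp(L/\log L)$ is exactly the computation behind Lemma \ref{duality} of the paper. However, there is a genuine gap in the uniqueness step, and it is precisely the gap that the paper's Proposition \ref{3-energy} is designed to close.

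The problem is your claim that the boundary term $\int \beta(u)\,b\cdot\nabla\phi_R\,dx$ vanishes as $R\to\infty$ when $\beta$ is merely bounded with $\beta(0)=0$. The hypothesis \eqref{hyp-b-1} only gives $\frac{|b|}{1+|x|}\in L^1+L^\infty$, so $b$ is allowed to grow linearly. After using $|\nabla\phi_R|\lesssim 1/R$ on the annulus $\{R\le |x|\le 2R\}$, the $L^\infty$ part of $b/(1+|x|)$ contributes a term of size $\lesssim \|\beta(u)\|_\infty\,|B_{2R}\setminus B_R|\sim R^n$, which does not vanish unless $\beta(u(t,\cdot))\in L^1(\R^n)$. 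For an arbitrary bounded weak solution with $u_0=0$ there is no a priori reason for $\beta(u(t,\cdot))$ to be integrable: nothing prevents $u(t,\cdot)$ from being bounded away from zero outside a compact set when $b$ grows at infinity. The same integrability is also what makes your final passage $R\to\infty$ in $F_R$ meaningful, and what lets the estimate $\|\beta(u)\phi_R\|_{L\log L\log\log L}\lesssim F_R\log(1/F_R)\log\log(1/F_R)$ be uniform in $R$.

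The paper fills exactly this hole with an additional, and genuinely non-routine, step: Proposition \ref{3-energy} shows that a bounded weak solution with vanishing initial datum necessarily lies in $L^\infty(0,T;L^2)$. This is done by a duality argument with the \emph{backward} transport problem $\partial_t v + b\cdot\nabla v + B_2 v = 0$, $v(T_0,\cdot)=\chi_K\,u(T_0,\cdot)$, where $B_2$ is the $L^\infty$ part of $\div b$; testing the forward equation for $u$ against the backward solution $v$, and exploiting the multiplicative property that $uv$ again solves a transport equation with zero initial datum, yields an $L^2$ bound on $u$ that is uniform in $K$ and $T_0$. Only after this upgrade does the paper run the renormalized $L^p$-energy estimate (your $F_R$, with $\beta(s)=|s|^p$) and the Gronwall/Osgood comparison with the explicit barrier $\alpha^*(t)=\exp\{-\exp\{\exp\{\log\log\log(1/\epsilon)-16e\int_0^t\beta\}\}\}$; at that stage $|u|^p\in L^1$, the cutoff terms vanish, and the argument closes. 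Your proposal, without the $L^2$ a priori bound, has no way to justify the vanishing of the boundary term or the finiteness of the Orlicz norm, so as written the uniqueness argument does not go through.
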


\noindent
See Section \ref{prelim} for the precise definition of the Orlicz space $\Exp(\frac{L}{\log L})$.

\begin{rem}
The conclusion of Theorem \ref{main} still holds if we add reaction and source terms. Namely, in Theorem \ref{main} the same conclusion holds if we replace \eqref{tran-eq} by
$$
\begin{cases}
\dfrac{\partial u}{\partial t}+b\cdot \nabla u+cu=f & (0,T)\times \R^n \\
      u(0,\cdot)=u_0 &\R^n
 \end{cases}
$$
provided that $c,f\in L^1(0,T;L^\infty)$, and $b\in L^1(0,T;W^{1,1}_{loc})$ satisfies \eqref{hyp-b-1} and \eqref{hyp-b-2}. The proof works similarly.
\end{rem}

\begin{rem}\label{manylogs}
One can still strengthen the borderline a bit more. More precisely, well-posedness still holds if  $b\in L^1(0,T;W^{1,1}_{loc})$ satisfies \eqref{hyp-b-1} and, at the same time, \eqref{hyp-b-2} is replaced by the less restrictive condition
$$\div b\in  L^1(0,T;L^{\infty})+L^1\left(0,T;\Exp\left(\frac{L}{\log L\,\log\log L\,\dots\,\underbrace{\log\cdots\log}_{k}L} \right)\right).$$
The proof follows similarly to that of Theorem \ref{main}.
\end{rem}

\noindent
At this point, it might bring some light reminding the chain of strict inclusions
$$\Exp L \subset \Exp\left(\frac{L}{\log L}\right)\subset \Exp\left(\frac{L}{\log L\,\log\log L\,\dots\,\underbrace{\log\cdots\log}_{k}L}\right).$$
In particular, the first one explains the following corollary, which unifies DiPerna-Lions and Mucha's results.

\begin{coro}\label{cor-exp}
Let $T>0$. Assume that  $b\in L^1(0,T;W^{1,1}_{loc})$ satisfies \eqref{hyp-b-1} and
\begin{equation*}
\div b\in L^1(0,T;L^\infty)+ L^1(0,T;\Exp L).
\end{equation*}
Then for each $u_0\in L^\infty$, there exists a unique weak solution $u\in L^\infty(0,T;L^\infty)$ of the Cauchy problem \eqref{tran-eq}.
\end{coro}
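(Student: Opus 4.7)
The plan is to derive Corollary~\ref{cor-exp} directly from Theorem~\ref{main} by exploiting the continuous embedding $\Exp L \hookrightarrow \Exp(L/\log L)$ between the two Orlicz spaces, which is precisely the first strict inclusion highlighted just before the corollary statement. Given $b$ satisfying the hypothesis of the corollary, I would decompose the divergence as $\div b = d_\infty + d_E$ with $d_\infty \in L^1(0,T; L^\infty)$ and $d_E \in L^1(0,T; \Exp L)$. If we know the norm estimate $\|f\|_{\Exp(L/\log L)} \le C\,\|f\|_{\Exp L}$ for every measurable $f$, then integrating in the time variable immediately yields $d_E \in L^1(0,T; \Exp(L/\log L))$, so the full divergence satisfies the hypothesis \eqref{hyp-b-2}. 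Combined with \eqref{hyp-b-1}, Theorem~\ref{main} then provides the desired existence and uniqueness of the weak solution $u \in L^\infty(0,T;L^\infty)$ for any bounded initial datum~$u_0$.

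All that really needs checking is therefore the Orlicz embedding. Denote by $\Phi$ and $\Psi$ the Young functions that generate $\Exp L$ and $\Exp(L/\log L)$ respectively, so that $\Phi(t) \sim e^{t}$ and $\Psi(t) \sim e^{t/\log t}$ as $t \to \infty$, both normalised to vanish at the origin at the same polynomial order. Since $t/\log t \le t$ for $t \ge e$, the asymptotic comparison gives $\Psi(t)/\Phi(t) \to 0$ as $t \to \infty$, and near zero the quotient stays bounded because the two functions vanish at the same order. Consequently $\Psi(t) \le C\,\Phi(t)$ for every $t \ge 0$, and inserting this pointwise inequality into the definition of the Luxemburg norm produces $\|f\|_{\Psi} \le C\,\|f\|_{\Phi}$, as required. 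There is no serious obstacle to this argument; the whole content of the corollary is the observation that the Orlicz scale handled by Theorem~\ref{main} already subsumes $\Exp L$, so that DiPerna--Lions' result and (the unrestricted version of) Mucha's assumption both fall under the same umbrella.
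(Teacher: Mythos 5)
Your argument is correct and is exactly the paper's approach: the corollary is obtained from Theorem \ref{main} by observing the Orlicz inclusion $\Exp L \subset \Exp(L/\log L)$, which the paper states without detailed verification just before the corollary. With the paper's precise Young functions $\Phi(t)=e^{t}-1$ and $\Psi(t)=\exp\{t/\log^{+}t\}-1$ the pointwise bound is even cleaner than you indicate, namely $\Psi(t)\le\Phi(t)$ for all $t\ge0$ (with equality on $[0,e]$), so the Luxemburg norms satisfy $\|f\|_{\Exp(L/\log L)}\le\|f\|_{\Exp L}$ with constant $1$ and no convexity normalization of the constant is needed.
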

\noindent
The proof of Theorem \ref{main} will be built upon the renormalization property by DiPerna-Lions \cite{dl89} and properties of Orlicz spaces. A key ingredient is an a priori estimate by using the backward equation, which shows that if
$u\in L^\infty(0,T;L^\infty)$ is a solution of \eqref{tran-eq} with the initial value $u_0\equiv 0$, then $u\in L^\infty(0,T;L^2\cap L^\infty)$. See Proposition \ref{3-energy} below.
Indeed, the idea behind this is a kind of multiplicative property. That is, if $u_1, u_2 \in L^\infty(0,T;L^\infty)$ satisfy
$$
\dfrac{\partial u_i}{\partial t}+b\cdot \nabla u_i+c_i\,u=0 \ \mathrm{in }\ (0,T)\times\R^n,
$$
then the pointwise multiplication $u_1u_2$ solves
$$
\dfrac{\partial (u_1u_2)}{\partial t}+b\cdot \nabla (u_1u_2)+(c_1+c_2)\,(u_1u_2)=0 \ \mathrm{in }\ (0,T)\times\R^n.$$
See Proposition \ref{commutator-bv-2} below for the details.\\
\\
Notice that our assumption \eqref{hyp-b-2} on the divergence is too weak to guarantee the well-posedness of the transport equation in the $L^p$ case for finite values of $p$. To explain this, let us assume for a while that $b$ generates a flow $X(t)=X(t,x)$ through the ODE \eqref{ODE}. Boundedness of $\div b$ guarantees that the image $X(t)_\sharp m$ of Lebesgue measure $m$ is absolutely continuous and has bounded density (see \cite{dl89}). If $\div b$ is not bounded, but only (sub)-exponentially integrable, then one may still expect $X(t)_\sharp m<<m$, but boundedness of density might be lost. Thus no control on $L^p$ norms is expected if $p\in [1,\infty)$. \\
\\
At this point it is worth mentioning that the existence and uniqueness of such a flow $X(t)$ is not an easy issue in our context. Nevertheless, if one assumes $\frac{|b(x,t)|}{1+|x|}\in L^1(0,T; L^\infty)$ and $\div b\in L^1(0,T;L^\infty)+L^1(0,T;\Exp L)$, then a unique flow can be obtained as a consequence of the results from \cite{cc05}. We will come back to the flow issue in a forthcoming paper.\\
\\
We have the following quantitative estimate in $L^p\cap L^\infty$ case under assumption \eqref{hyp-b-2}. For an easier formulation in the case $\div b\in L^1(0,T;L^\infty)+L^1(0,T;\Exp L)$, see Corollary \ref{quant-ex}.

\begin{thm}\label{quant}
Let $T,M>0$ and $1\le p<\infty$. Suppose that $b\in L^1(0,T;W^{1,1}_{loc})$ satisfies \eqref{hyp-b-1} and
\eqref{hyp-b-2}. There exists $\epsilon>0$ such that, for every $u_0\in L^p\cap  L^\infty$ with
$\|u_0\|_{L^\infty}\le M$ and $\|u_0\|^p_{L^p}< \ez$, the transport problem \eqref{tran-eq} has a unique solution $u$ and moreover
$$
\left|\log\log\log\left( \frac{1}{\|u\|^p_{L^\infty(0,T; L^p)}}\right)-\log\log\log\left(\frac{1}{\|u_0\|^p_{L^p}}\right)\right|\leq 16e\int_0^T\beta(s)\,ds
$$
where $\div b=B_1+B_2$ and $\beta(t)=\|B_1(t,\cdot)\|_{\Exp(\frac{L}{\log L})}+\|B_2(t,\cdot)\|_{L^\infty}$.
\end{thm}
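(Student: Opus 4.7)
The plan is to derive a differential inequality for $\phi(t):=\|u(t,\cdot)\|_{L^p}^p$ that, after one integration, produces the triple logarithm appearing in the statement.

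Existence, uniqueness and the bound $\|u(t)\|_{L^\infty}\le M$ are handed to us by Theorem~\ref{main}. The crucial input is the renormalization property built into its proof: combining it with the multiplicative rule of Proposition~\ref{commutator-bv-2} applied to $s\mapsto |s|^p$, one finds that $|u|^p$ is a weak solution of the transport equation driven by~$b$. Testing this equation against a family of cut-offs increasing to the constant~$1$ (legitimate thanks to~\eqref{hyp-b-1}) I would obtain
$$\phi'(t) \;=\; \int_{\R^n} (\div b)(t,x)\,|u(t,x)|^p\,dx \qquad\text{for a.e.\ } t\in(0,T),$$
hence $|\phi'(t)| \le \int_{\R^n}|\div b|\,|u|^p\,dx$.

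Next, decomposing $\div b = B_1+B_2$ with $B_1(t,\cdot)\in \Exp(L/\log L)$ and $B_2(t,\cdot)\in L^\infty$, the $B_2$-piece contributes at most $\|B_2\|_{L^\infty}\phi(t)$. For the $B_1$-piece I would invoke Orlicz--H\"older,
$$\int |B_1|\,|u|^p\,dx \;\le\; 2\,\|B_1\|_{\Exp(L/\log L)}\,\bigl\||u|^p\bigr\|_{\Psi},$$
where $\Psi$ is the Young function complementary to $\Phi(t) = \exp(t/\log(e+t))-1$; an elementary calculation gives $\Psi^{-1}(s)\sim s/(\log s\,\log\log s)$ as $s\to\infty$. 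Since $|u|^p\le M^p$ and $\Psi(t)/t$ is nondecreasing,
$$\int \Psi(|u|^p/\lambda)\,dx \;\le\; \frac{\Psi(M^p/\lambda)}{M^p/\lambda}\cdot\frac{\phi(t)}{\lambda} \;=\; \frac{\Psi(M^p/\lambda)}{M^p}\,\phi(t),$$
so setting the right-hand side equal to~$1$ and inverting $\Psi$ yields the Luxemburg bound
$$\bigl\||u|^p\bigr\|_{\Psi} \;\le\; \frac{M^p}{\Psi^{-1}(M^p/\phi(t))} \;\lesssim\; \phi(t)\,\log\!\bigl(M^p/\phi(t)\bigr)\,\log\log\!\bigl(M^p/\phi(t)\bigr).$$

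Putting the two pieces together yields
$$|\phi'(t)| \;\le\; C\,\beta(t)\,\phi(t)\,\log(1/\phi(t))\,\log\log(1/\phi(t)),$$
valid as long as $\phi(t)$ stays below a threshold determined by $M$; this is exactly where the smallness parameter~$\epsilon$ enters, and the point at which the factor $M^p$ is absorbed by the iterated-log asymptotics $\log(M^p/\phi)\sim\log(1/\phi)$ as $\phi\to 0$. Since
$$\frac{d}{d\phi}\bigl[\log\log\log(1/\phi)\bigr] \;=\; -\,\frac{1}{\phi\,\log(1/\phi)\,\log\log(1/\phi)},$$
dividing by this denominator and integrating in~$t$ from $0$ to $T$ produces the triple-log inequality claimed in the theorem (the two-sided bound $|\phi'|\le\cdots$ controls both the growth and the decay of $\phi$, and taking a supremum in $t$ then replaces $\phi(T)$ by $\|u\|_{L^\infty(0,T;L^p)}^p$). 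The hardest step will be the Luxemburg estimate $\||u|^p\|_{\Psi}\lesssim \phi\,\log(1/\phi)\,\log\log(1/\phi)$: it requires identifying the complementary Young function to $\Exp(L/\log L)$ and extracting the sharp iterated-log asymptotics of~$\Psi^{-1}$. A secondary difficulty is chasing the constants produced by Orlicz--H\"older, the factor $M^p$ and the smallness threshold~$\epsilon$ in order to land precisely on the universal constant~$16e$.
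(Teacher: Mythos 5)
Your proposal is correct and follows essentially the same route as the paper: you identify the Orlicz duality pair $\Exp(L/\log L)$ versus $L\log L\log\log L$, control the Luxemburg norm of $|u|^p$ via its $L^1$ and $L^\infty$ norms (this is precisely the content of the paper's Lemma~\ref{duality}, which is proved by an explicit choice of the Luxemburg parameter~$\lambda$ rather than via asymptotics of the complementary Young function $\Psi^{-1}$, but the outcome is the same $\phi\,\log(1/\phi)\,\log\log(1/\phi)$ bound), and then integrate a differential inequality whose antiderivative is $\log\log\log(1/\phi)$.

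Two small remarks on the packaging. First, the paper deduces Theorem~\ref{quant} from the stability estimate~\eqref{stability-p} rather than directly from a differential inequality for $\phi$: it runs a Gronwall comparison with the explicit supersolution $\alpha^\ast(t)$ over a finite partition $0=T_0<\dots<T_{i+1}=T$, and then applies this estimate \emph{twice} — once to the forward problem with initial datum $u_0$ and once to the backward problem with terminal datum $u(T)$ — to get the two directions of the inequality. Your observation that the single two-sided estimate $|\phi'(t)|\le 16e\,\beta(t)\,\phi\log(1/\phi)\log\log(1/\phi)$ already controls both increase and decrease of $\phi$, and that integrating from $0$ to the (near-)maximizing time $t^\ast$ produces the claim for $\|u\|_{L^\infty(0,T;L^p)}^p$ directly, is a genuine streamlining and avoids the explicit backward step. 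Second, you cite Proposition~\ref{commutator-bv-2} (the $BV$ multiplicativity lemma) for the renormalization of $|u|^p$; in the $W^{1,1}_{loc}$ setting the paper instead uses the DiPerna--Lions commutator Lemma~\ref{regularize} to justify the identity $\phi'(t)=\int(\div b)|u|^p\,dx$ — a minor point, since either works here. The constant $16e$ arises exactly as you anticipate: $2$ from Orlicz--H\"older, $2e$ from the $L\log L\log\log L$ estimate, and $2\times 2$ from absorbing $\log(e+M)$ and $\log\log(e^e+M)$ into $|\log\phi|$ and $|\log|\log\phi||$ once $\phi$ is below the threshold $\exp(-e^{e+M})$, which is what the smallness parameter $\epsilon$ buys.
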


\noindent
Relying on Ambrosio's seminal result \cite{a04}, Theorem \ref{main} admits an extension to the setting of bounded variation ($BV$) vector fields.

\begin{thm}\label{main-bv}
Let $T>0$. Assume that  $b\in L^1(0,T; BV_{loc})$ satisfying \eqref{hyp-b-1} and \eqref{hyp-b-2}. Then for every $u_0\in L^\infty$ there exists a unique weak solution $u\in L^\infty(0,T;L^\infty)$ of the transport problem \eqref{tran-eq}.
\end{thm}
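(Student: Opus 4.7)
The plan is to mirror the proof of Theorem \ref{main}, but replace DiPerna–Lions' renormalization result by Ambrosio's extension \cite{a04} to the $BV$ setting. The crucial observation is that under hypothesis \eqref{hyp-b-2}, the divergence $\div b$ is an $L^1_{loc}$ function, hence automatically absolutely continuous with respect to Lebesgue measure. This is exactly the assumption required by Ambrosio's theorem, which then guarantees that every bounded weak solution $u$ of \eqref{tran-eq} is \emph{renormalized}: for every $\beta\in \cC^1(\R)\cap\Lip(\R)$, the composition $\beta(u)$ is a weak solution of \eqref{tran-eq-c}.

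First I would establish existence. A standard space regularization $b_\epsilon=b*\rho_\epsilon$ produces smooth fields, which still satisfy \eqref{hyp-b-1} and, by convexity of Orlicz norms, inherit the decomposition $\div b_\epsilon = B_{1,\epsilon}+B_{2,\epsilon}$ with $\|B_{1,\epsilon}(t,\cdot)\|_{\Exp(L/\log L)}+\|B_{2,\epsilon}(t,\cdot)\|_{L^\infty}\le \beta(t)$. For each $\epsilon$ a classical solution $u_\epsilon$ of the regularized Cauchy problem exists and satisfies $\|u_\epsilon\|_\infty \le \|u_0\|_\infty$. Weak$^*$ compactness in $L^\infty$ yields a candidate weak solution $u$ along a subsequence; passing to the limit in the distributional formulation is routine, thanks to $b_\epsilon\to b$ in $L^1(0,T;L^1_{loc})$ and $\div b_\epsilon \to \div b$ in $L^1_{loc}$.

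Uniqueness is the core. By linearity it suffices to show that if $u\in L^\infty(0,T;L^\infty)$ solves \eqref{tran-eq} with $u_0\equiv 0$, then $u\equiv 0$. Using Ambrosio's renormalization with (a smooth truncation of) $\beta(s)=s^2$, one finds that $u^2$ is a weak solution of the Cauchy problem
\[
\dfrac{\partial (u^2)}{\partial t}+b\cdot\nabla (u^2)=0,\qquad u^2(0,\cdot)=0,
\]
and, combined with the multiplicative property (Proposition \ref{commutator-bv-2}, whose naming already points to the $BV$ framework) one may bootstrap to higher powers. At this point the backward-equation a priori estimate of Proposition \ref{3-energy} can be invoked verbatim: it depends only on the divergence being in $L^1(0,T;L^\infty)+L^1(0,T;\Exp(L/\log L))$ and on the availability of renormalization, both of which are secured. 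This forces $u\in L^\infty(0,T;L^2\cap L^\infty)$ and ultimately $u\equiv 0$, by exactly the same chain of inequalities as in the proof of Theorem \ref{main}.

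The main obstacle is making sure that Ambrosio's renormalization theorem applies cleanly under assumptions \eqref{hyp-b-1}–\eqref{hyp-b-2}. The $BV$ renormalization requires $\div b$ to be absolutely continuous w.r.t.\ Lebesgue measure, together with a mild growth/integrability hypothesis on $b$; both are provided by our setting since \eqref{hyp-b-2} in particular gives $\div b\in L^1(0,T;L^1_{loc})$, and \eqref{hyp-b-1} is the standard DiPerna–Lions growth condition used by Ambrosio as well. Once renormalization is in hand, the $W^{1,1}$ versus $BV$ distinction disappears, and the rest of the argument — a priori estimate plus the multiplicative property plus a Gronwall-type closing step in the Orlicz scale — transfers to the $BV$ case without modification.
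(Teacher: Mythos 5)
Your proposal matches the paper's approach: existence via regularization, and uniqueness by replacing the DiPerna--Lions $W^{1,1}$ commutator estimate with Ambrosio's $BV$ renormalization (Theorem \ref{renormalized-bv}) plus the multiplicative property (Proposition \ref{commutator-bv-2}), then running the backward-equation a priori estimate and the Orlicz--Gronwall closure exactly as in Theorem \ref{main}. One small imprecision: Proposition \ref{3-energy} cannot literally be invoked verbatim, since it is stated for $b\in L^1(0,T;W^{1,1}_{loc})$ and its proof rests on Lemma \ref{regularize}; in the $BV$ case the same estimate must be re-derived using Proposition \ref{commutator-bv-2} to justify that the product of $u$ with the backward solution $v$ itself solves a transport equation, which is precisely what Step 3 of the paper's proof does. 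Also, the reference to "bootstrapping to higher powers" via $\beta(s)=s^2$ is not quite what happens — the a priori $L^2$ bound comes from pairing $u$ with the backward solution, not from iterating renormalizations.
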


\noindent
Concerning the optimality of \eqref{hyp-b-2} in Theorem \ref{main}, and after re-analyzing an example from \cite[Section 4.1]{dl89}, we can show that the condition
$$b\in L^\infty(0,T;L^\infty),\hspace{.3cm}\text{and}\hspace{.3cm} \div b\in L^1\left(0,T;\Exp\left(\frac{L}{\log^\gamma L}\right)\right)\text{ for some }\gamma>1$$
is not sufficient to guarantee uniqueness.

\begin{thm}\label{counter}
Let $T>0$. Given $\gamma\in (1,\infty)$, there exists a vector field $b\in L^1(0,T;W^{1,1}_{loc})$,
that satisfies
$$b\in L^\infty(0,T;L^\infty),\hspace{.3cm}\text{and}\hspace{.3cm} \div b\in L^1\left(0,T;\Exp\left(\frac{L}{\log^\gamma L}\right)\right),$$
such that for some $u_0\in \cC^\infty_c(\R^n)$ the Cauchy problem \eqref{tran-eq} admits infinitely many weak solutions $u\in L^\infty(0,T;L^\infty)$.
\end{thm}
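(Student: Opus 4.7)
The plan is to sharpen the classical example of \cite[Section 4.1]{dl89} by constructing an explicit autonomous, radial, outward-pointing velocity field $b$ whose divergence sits just inside $\Exp(L/\log^\gamma L)$, while the associated radial ODE exhibits finite-time non-uniqueness at the origin. The non-unique trajectories emerging from the origin will then be used to produce a one-parameter family of weak solutions, each ``emitting'' a growing ball of unit mass from the origin at a different starting time. Concretely, fix $\alpha \in (1, \gamma)$, which is possible since $\gamma > 1$, put $L(r) = \log(1/r)(\log\log(1/r))^\alpha$ for $0 < r < 1/e^2$, pick a cutoff $\eta \in \cC^\infty([0,\infty))$ with $\eta \equiv 1$ on $[0, 1/e^3]$ and $\supp\eta \subset [0, 1/e^2]$, and set
$$b(x) = x\,L(|x|)\,\eta(|x|).$$
Then $b$ is bounded, compactly supported, smooth on $\R^n\setminus\{0\}$, and since $|\nabla b|\lesssim L(|x|)$ is integrable against $r^{n-1}\,dr$, one has $b \in W^{1,1}_{loc}$. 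Being autonomous, $b$ trivially lies in $L^\infty(0,T;L^\infty)$.

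The key computation is to verify that $\div b$ belongs to $\Exp(L/\log^\gamma L)$. A direct calculation gives $\div b = nL(|x|)\eta(|x|) + |x|(L\eta)'(|x|)$, whose leading behavior near the origin is $n\log(1/|x|)(\log\log(1/|x|))^\alpha$. Since $\log|\div b| \sim \log\log(1/|x|)$ for small $|x|$, one obtains
$$\frac{|\div b(x)|}{\log^\gamma|\div b(x)|} \;\lesssim\; \log(1/|x|)\,(\log\log(1/|x|))^{\alpha-\gamma} \;\le\; \log(1/|x|),$$
where the last inequality uses crucially that $\alpha - \gamma < 0$. Substituting this into the Young function $\Phi(t) \sim \exp(t/\log^\gamma t)$ gives $\Phi(|\div b(x)|/\lambda) \lesssim |x|^{-C/\lambda}$, which is integrable against $r^{n-1}\,dr$ once $\lambda$ is chosen large enough. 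Hence $\div b \in \Exp(L/\log^\gamma L)$, and being autonomous, it lies in $L^1(0,T;\Exp(L/\log^\gamma L))$.

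To produce the non-unique solutions, I would analyze the radial ODE $\dot r = rL(r)\eta(r)$. In the region where $\eta \equiv 1$ the substitution $u = -\log r$ turns this into $\dot u = -u(\log u)^\alpha$, and the convergence of $\int^\infty du/(u(\log u)^\alpha)$ (which holds precisely because $\alpha > 1$) implies that the Cauchy problem $r(0) = 0$ admits a one-parameter family of solutions $\{\rho_{t_0}\}_{t_0 \ge 0}$ with $\rho_{t_0}(t) = 0$ for $t \le t_0$ and $\rho_{t_0}(t) > 0$ solving the radial ODE for $t > t_0$. For each $t_0 \in [0,T)$, define
$$v_{t_0}(t,x) = \chi_{\{|x| \le \rho_{t_0}(t)\}}.$$
Applying the divergence theorem and differentiating under the moving-boundary integral, the weak transport formulation for $v_{t_0}$ collapses to the Rankine--Hugoniot identity $\dot\rho_{t_0} = \rho_{t_0}L(\rho_{t_0})\eta(\rho_{t_0})$, which holds by construction. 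Thus each $v_{t_0}$ is a weak solution of \eqref{tran-eq} with zero initial datum, and they are pairwise distinct because their supports differ. Taking $u_0 \equiv 0 \in \cC^\infty_c(\R^n)$ already displays infinitely many weak solutions, namely $0$ together with the whole family $\{v_{t_0}\}_{t_0 \in [0,T)}$; for a nontrivial $u_0 \in \cC^\infty_c(\R^n)$ the same conclusion follows by adding the $v_{t_0}$ to any natural solution constructed from the characteristics outside the origin.

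The main obstacle is the delicate balance between $\alpha > 1$ (needed for finite-time non-uniqueness of the radial ODE at the origin) and $\alpha < \gamma$ (needed for $\div b \in \Exp(L/\log^\gamma L)$). The hypothesis $\gamma > 1$ is precisely what makes the window $(1,\gamma)$ non-empty; the construction collapses exactly at $\gamma = 1$, reflecting the sharpness of Theorem \ref{main}.
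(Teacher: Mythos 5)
Your proposal is correct, but it takes a genuinely different route from the paper's. The paper's proof is a refinement of the DiPerna--Lions Cantor-set example in $\R^2$: it constructs a smooth $g$ with a carefully tuned triple-exponential decay
$\exp\{-\exp\{\exp\{\cdot\}\}\}$ so that $\div\tilde b=\phi\cdot g'(f^{-1})/g(f^{-1})$ lands in $\Exp(L/\log^\gamma L)$, and the infinitude of solutions comes from the family of atom-free measures $m$ on the Cantor set via $u_m(t,x)=u_0(\widetilde X_m(t,x))$; a nontrivial part of the paper's work (flagged by Crippa) is verifying by change of variables that each $u_0\circ\widetilde X_m$ is indeed a distributional solution. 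Your proposal instead builds an autonomous radial field in $\R^n$ with a single Osgood-type point singularity, $\div b\sim n\log(1/|x|)(\log\log(1/|x|))^\alpha$ with $\alpha\in(1,\gamma)$. The Orlicz check hinges on $\alpha<\gamma$ (so $|\div b|/\log^\gamma|\div b|\lesssim \log(1/|x|)$), and the ODE non-uniqueness hinges on $\alpha>1$ (so $\int^\infty du/(u(\log u)^\alpha)<\infty$), exactly mirroring the same dichotomy that underlies the paper's Cantor-set computation. Your family of solutions consists of moving-ball indicators $\chi_{\{|x|\le\rho_{t_0}(t)\}}$, verified directly through the Rankine--Hugoniot balance $\dot\rho=b\cdot\nu$ via the transport theorem and the Gauss--Green formula for $W^{1,1}$ fields, which bypasses the flow/composition issue altogether. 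What your approach buys: it is more elementary, works in every dimension at once, and sidesteps the subtle step of justifying $u_0\circ X$ as a weak solution. What the paper's approach buys: it stays within the canonical DiPerna--Lions framework, produces continuous (even $C^1$-in-space) non-unique solutions for a genuinely non-zero smooth $u_0$ rather than discontinuous indicators with $u_0\equiv 0$, and exhibits the degeneracy along a Cantor set rather than a single point. (One cosmetic remark: you take $u_0\equiv 0\in\cC^\infty_c$; this satisfies the theorem as stated, and by linearity you can add your $v_{t_0}$ to any fixed solution with non-zero data, as you note.)
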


\noindent
The proof is based on DiPerna-Lions' example { on the non-uniqueness of the flow}
\cite[Section 4.1]{dl89}. The key points are to construct an explicit smooth function
vanishing exactly at the points of the one third Cantor set, { and to show that $u_0$ composed with the flow is
a distributional solution to the transport equation.}$^{1}$
{\footnotetext{ 1. We thank G. Crippa for pointing out  to us
 the issue of showing $u_0$ composed with the flow is a solution.}}

\begin{rem}
Similar examples to that in Theorem \ref{counter} can be found in the setting of Remark \ref{manylogs}, with a small modification on the smooth function $g$ (see Step 2 of the proof). More precisely, given $\gamma\in (1,\infty)$ and $k\in\{2,3,...\}$, one can find a vector field $b\in L^1(0,T;W^{1,1}_{loc})$ satisfying \eqref{hyp-b-1} and such that
$$\div b\in  L^1\left(0,T;\Exp\left(\frac{L}{(\log L) \,(\log\log L)\,\dots\,(\underbrace{\log\cdots\log}_{k}L)^\gamma}\right)\right),$$
for which the Cauchy problem \eqref{tran-eq} admits, for every $u_0\in\cC^\infty_c(\R^n)$, infinitely many weak solutions $u\in L^\infty(0,T;L^\infty)$. \end{rem}

\begin{rem}
In the context of Corollary \ref{cor-exp}, and arguing again as in the proof of Theorem \ref{counter}, one can show that the condition $\div b\in L^1(0,T;\Exp L)$ cannot be replaced by $\div b \in L^1(0,T; \Exp(L^{1/\gamma}))$ if $\gamma>1$.
\end{rem}

\begin{rem}
The example of Theorem \ref{counter} admits a further generalization to the setting of the Euclidean space when equipped with Gaussian measure $d\gamma_n$. Namely, one can show that the assumption
$$\exp\{C|b|+C(|\div _{\gamma_n}b|)^\alpha\}\in L^1(0,T;L^1(\gamma_n))\hspace{1cm}\text{ for some }\alpha\in(0,1)$$
does not imply uniqueness of the flow, and therefore uniqueness for \eqref{tran-eq} also fails. See \cite{cc05,af09,fl10}.
\end{rem}

\noindent
The paper is organized as follows. In Section \ref{prelim} we recall some basic aspects of Orlicz spaces, and prove some technical estimates. In Section 3 we prove Theorem \ref{main}. Section 4 is devoted to stability results. In Section 5, we prove Theorem \ref{main-bv}. In the last section we prove Theorem \ref{counter}. Throughout the paper, we denote by $C$ positive constants which are independent of the main parameters, but which may vary from line to line.

\section{Orlicz spaces}\label{prelim}

\noindent
We will need to use some Orlicz spaces and their duals. For the reader's convenience, we recall here some definitions. See the monograph \cite{rr91} for the general theory of Orlicz spaces. Let
$$P:[0,\infty)\mapsto [0,\infty),$$
be an increasing homeomorphism onto $[0,\infty)$, so that $P(0)=0$ and $\lim_{t\to\infty}P(t)=\infty$. The Orlicz space $L^P$ is the set of measurable functions $f$ for which the Luxembourg norm
$$\|f\|_{L^P}=\inf\left\{\lambda>0:\int_{\R^n}P\left(\frac{|f(x)|}{\lambda}\right)\,dx\leq 1\right\}$$
is finite. In this paper we will be mainly interested in two particular families of Orlicz spaces. Given $r,s\geq 0$, the first family corresponds to
$$P(t)=t\left(\log^+t\right)^{r}\,\left({\log^+ \log^+}t\right)^{s},$$
where $\log^+t:=\max\{1,\log t\}.$ The obtained $L^{P}$ spaces are known as \emph{Zygmund spaces}, and will be denoted from now on by $L\log^{r}L\,\log\log^{s}L $. The second family is at the upper borderline. For $\gamma\geq 0$ we set
\begin{equation}\label{psi-function}
P(t)=\exp\left\{\frac{t}{(\log^+t)^\gamma}\right\}-1, \hspace{1cm} \,t\geq 0.
\end{equation}
Then we will denote the obtained $L^P$ by $\Exp(\frac{L}{\log^\gamma L} )$. If $\gamma=0$ or $\gamma=1$, we then simply write $\Exp L$ and $\Exp(\frac{L}{\log L})$, respectively. Note that $0\leq \gamma_1< \gamma_2$ implies $\Exp L\subset\Exp(\frac{L}{\log^{\gamma_1}L} )\subset \Exp(\frac{L}{\log^{\gamma_2} L})$. Also, let us observe that compactly supported $BMO$ functions belong to $\Exp L$. Similarly, we will denote by
$$\Exp\left(\frac{L}{\log L\,\log\log L\,\dots\,(\underbrace{\log\cdots\log}_{k}L)^\gamma}\right)$$
the Orlicz space corresponding to
$$P(t)=
\exp\left\{
\frac{t}{(\log^+t)\,(\log^+\log^+ t  )\dots\,(\underbrace{\log^+\cdots \log^+}_{k} t) ^\gamma}\right\}-1, \hspace{1cm} \,t\geq 0.$$
The following technical lemma will be needed at Section 3.

\begin{lem}\label{duality}
If $f\in L\log L\log\log L $  and $g\in \Exp(\frac{L}{\log L})$  then $fg\in L^1$ and
$$\int_{\R^n}|f(x)g(x)|\,dx\le 2\|f\|_{L\log L\log\log L}\,\|g\|_{\Exp(\frac{L}{\log L})}.$$
Moreover, if $f\in L^\infty\cap L\log L\log\log L $ then
$$ \aligned
\|f\|&_{L\log L\log\log L}\\
&\leq 2e\|f\|_{L^1 }\bigg(\log(e+\|f\|_{L^\infty})+|\log(\|f\|_{L^1 })|\bigg)\,\bigg(\log\log(e^e+\|f\|_{L^\infty })+|\log|\log(\|f\|_{L^1 })||\bigg).\endaligned$$
\end{lem}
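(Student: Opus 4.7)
The plan is to prove the two statements separately: the first by a pointwise Young inequality combined with the scaling built into the Luxemburg norm, and the second by a direct estimate of $\int P(|f|/\lambda)$ for a carefully chosen $\lambda$.

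\textbf{Hölder-type estimate.} Set $P(t)=t\log^+t\log^+\log^+t$ and $Q(s)=\exp(s/\log^+s)-1$, so that $L^P=L\log L\log\log L$ and $L^Q=\Exp(L/\log L)$. I would first establish the pointwise Young-type inequality
$$st \le P(t)+Q(s), \qquad s,t\ge 0,$$
by checking that $Q$ is essentially the Legendre transform of $P$: for large $t$ one has $P'(t)\sim \log t\,\log\log t$, so the critical point of $st-P(t)$ satisfies $s\sim \log t\,\log\log t$, equivalently $\log t\sim s/\log s$, i.e.\ $t\sim e^{s/\log s}\sim Q(s)$; in the region where $\log^+$ takes the value $1$ the inequality reduces to $st\le t+(e^s-1)$, which is elementary. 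Once the pointwise inequality is in hand, the Hölder-type bound follows by the standard normalization: writing $\tilde f=f/\|f\|_{L^P}$ and $\tilde g=g/\|g\|_{L^Q}$, the Luxemburg norm definition gives $\int P(|\tilde f|)\le 1$ and $\int Q(|\tilde g|)\le 1$, and integrating $|\tilde f \tilde g|\le P(|\tilde f|)+Q(|\tilde g|)$ yields $\int|fg|\le 2\|f\|_{L^P}\|g\|_{L^Q}$.

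\textbf{Quantitative estimate.} Set $A=\|f\|_{L^1}$, $B=\|f\|_{L^\infty}$, and introduce
$$L_1=\log(e+B)+|\log A|,\quad L_2=\log\log(e^e+B)+|\log|\log A||,\quad \lambda=2e\,A\,L_1\,L_2.$$
The goal is to verify $\int_{\R^n} P(|f|/\lambda)\,dx\le 1$, which yields $\|f\|_{L^P}\le\lambda$ and is exactly the claimed inequality. Using $|f|\le B$ together with the monotonicity of $\log^+$ in the outer factors of $P$,
$$\int_{\R^n} P(|f|/\lambda)\,dx \;\le\; \frac{A}{\lambda}\,\log^+\!\Big(\tfrac{B}{\lambda}\Big)\,\log^+\log^+\!\Big(\tfrac{B}{\lambda}\Big).$$
Expanding $\log(B/\lambda)=\log B-\log A-\log(2eL_1L_2)$ and discarding the negative correction crudely gives $\log^+(B/\lambda)\le L_1$; applying the same expansion once more together with the elementary estimate $\log(a+b)\le \log 2+\log a+\log b$ for $a,b\ge 1$ yields $\log^+\log^+(B/\lambda)\le \log 2+L_2\le 2L_2$. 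Substituting these bounds and recalling $\lambda=2eAL_1L_2$ produces $\int_{\R^n} P(|f|/\lambda)\,dx\le 1/e\le 1$, as required.

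\textbf{Main obstacle.} The bookkeeping in the quantitative estimate is the delicate part: since $\lambda$ itself involves the iterated logarithms $L_1,L_2$, the estimates of $\log^+(B/\lambda)$ and $\log^+\log^+(B/\lambda)$ are self-referential, and one must split into cases ($\lambda\ge B$ vs.\ $\lambda<B$, and $A\ge 1$ vs.\ $A<1$) to absorb the subleading contributions $\log L_1,\log L_2$ into the pre-factor $2e$. The Hölder part is essentially textbook Orlicz theory once the pointwise Young inequality is in hand; the only subtlety is that the convention $\log^+=\max\{1,\log\cdot\}$ flattens $P$ to a linear function on $[0,e]$, making $P$ a bona fide Young function and giving the factor $2$ in the Hölder bound as the standard one.
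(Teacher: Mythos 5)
Your overall plan is sound and lands on essentially the same scaling as the paper, but there are two deviations worth noting, one of which leaves a real gap. For the H\"older inequality, the paper simply invokes \cite[p.17]{rr91} (Orlicz duality), whereas you attempt to derive the pointwise Young inequality $st\le P(t)+Q(s)$ directly via a Legendre-transform heuristic. That heuristic identifies the right mechanism, but as written it is only a sketch: $P$ and $Q$ are not exact Legendre duals, so the word ``essentially'' hides a genuine computation that must track multiplicative constants (and might end with a constant other than $2$). Either carry this out rigorously, with the claimed inequality verified on the whole half-line, or cite the reference as the paper does --- the latter is cleaner, since the constant $2$ in the Orlicz--H\"older inequality is standard once one knows that $Q$ is equivalent to the complementary Young function of $P$.

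For the quantitative bound, your bookkeeping is genuinely different from the paper's and arguably tidier. The paper sets $\lambda_0=A L_1 L_2$ (without the $2e$), splits $\R^n$ into $E=\{|f|\le e^e\lambda_0\}$ and its complement, shows $\int P(|f|/\lambda_0)\le 2e$, and deduces $\|f\|_{L^P}\le 2e\lambda_0$ by homogeneity of the Luxemburg norm. You instead absorb the $2e$ into $\lambda$ from the outset and avoid the domain splitting entirely by substituting $|f|\le B$ into the slowly varying log-factors of $P$, arriving at $\int P(|f|/\lambda)\le 1/e$ in one pass. Both routes work, and yours needs fewer intermediate steps. The one wrinkle is your elementary inequality $\log(a+b)\le\log2+\log a+\log b$ for $a,b\ge1$: you apply it with $b=|\log A|$, which need not be $\ge1$. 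The needed conclusion $\log L_1\le\log2+L_2$ (hence $\le 2L_2$ since $L_2\ge 1>\log 2$) is still true --- when $b<1$ simply note $a+b<2a$ --- but this case split must be made explicit. The paper sidesteps exactly this issue by using the variant $\log(x+y)\le 2\log x+2|\log y|$ valid for $x\ge e$, $y\ge 0$, which handles both regimes uniformly and is the cleaner tool here.
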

\begin{proof} We refer to \cite[p.17]{rr91} for the H\"older inequality. Towards the second estimate, we start by noting that if $f\in L\log L\log\log L$ then $f\in L^1$ and $\|f\|_{L^1}\leq \|f\|_{L\log L\log\log L}$. By setting $M=\|f\|_{L^\infty}$,
$$\lambda=\|f\|_{L^1}\left[\log(e+M)+|\log(\|f\|_{L^1})|\right]\left[\log\log(e^e+M)+|\log|\log(\|f\|_{L^1})||\right],$$
and calling
$$E=\{x\in\R^n:\, |f(x)|\le e^e\lambda\},$$
we see that
\begin{eqnarray*}
&&\int_{\R^n}\frac{|f(x)|}{\lz}\log^+
\left(\frac{|f(x)|}{\lz}\right)\log^+\log^+\left(\frac{|f(x)|}{\lz}\right)\,dx\\
&&\quad\le \frac e\lz\int_E|f(x)|\,dx+\int_{\R^n\setminus E}\frac{|f(x)|}{\lz}\log
\left(\frac{|f(x)|}{\lz}\right)\log\log\left(\frac{|f(x)|}{\lz}\right)\,dx\\
&&\quad \le e+\int_{\R^n\setminus E}\frac{|f(x)|}{\lz}\log
\left(\frac{e+M}{\|f\|_{L^1}}\right)\log\log\left(\frac{e^e+M}{\|f\|_{L^1}}\right)\,dx\\
&&\quad \le e+\int_{\R^n\setminus E}\frac{|f(x)|\log\left[\log({e^e+M})+|\log{\|f\|_{L^1}}|\right]}{\|f\|_{L^1}\left[\log\log(e^e+M)+|\log|\log(\|f\|_{L^1})||\right]}\,dx.
\end{eqnarray*}
Notice that for $x\ge e$ and $y\ge 0$, it holds that
$$\log(x+y)\le 2\log x+2|\log y|,$$
which implies that
\begin{eqnarray*}
&&\int_{\R^n}\frac{|f(x)|}{\lz}\log^+
\left(\frac{|f(x)|}{\lz}\right)\log^+\log^+\left(\frac{|f(x)|}{\lz}\right)\,dx\\
&&\quad \le e+\int_{\R^n\setminus E}\frac{2|f(x)|\left[\log\log({e^e+M})
+|\log(|\log{\|f\|_{L^1}}|)|\right]}{\|f\|_{L^1}\left[\log\log(e^e+M)+|\log|\log(\|f\|_{L^1})||\right]}\,dx\le 2e.
\end{eqnarray*}
Therefore, we see that
\begin{eqnarray*}
&&\int_{\R^n}\frac{|f(x)|}{2e\lz}\log^+
\left(\frac{|f(x)|}{2e\lz}\right)\log^+\log^+\left(\frac{|f(x)|}{2e\lz}\right)\,dx\\
&&\quad\le\int_{\R^n}\frac{|f(x)|}{2e\lz}\log^+
\left(\frac{|f(x)|}{\lz}\right)\log^+\log^+\left(\frac{|f(x)|}{\lz}\right)\,dx\le 1,
\end{eqnarray*}
which gives the desired estimate.
\end{proof}

\section{Existence and Uniqueness}\label{section2}

\noindent
The main goal of this section is proving Theorem \ref{main}. To this end, we will first prove existence and uniqueness when the initial data is in $L^\infty\cap L^p$ for some $p\in[1,\infty)$ (see Proposition \ref{p-unique} below). Later on, we will use this fact to show in Proposition \ref{3-energy} that any weak solution $u\in L^\infty(0,T;L^\infty)$ to \eqref{tran-eq} with vanishing initial data $u_0\equiv 0$ is indeed uniformly square summable, i.e. $u\in L^\infty(0,T; L^2)$. These two steps will make the proof of Theorem \ref{main} almost automatic.\\
\\
We start with an existence result for initial data in $L^p\cap L^{\infty}$, $p\in[1,\infty)$, which holds under much milder assumptions on $\div b$.

\begin{prop}\label{p-exist}
Let $p\in [1,\infty)$ and $b\in L^1(0,T;W^{1,1}_{loc})$ be such that
$$\div b\in L^1(0,T;L^1)+ L^1(0,T;L^\infty).$$
Assume also that $c\in L^1(0,T;L^\infty)$. Then for every $u_0\in L^\infty\cap L^p$, there is a weak solution $u\in L^\infty(0,T;L^p\cap L^\infty)$ to the transport problem
$$
\begin{cases}
\dfrac{\partial u}{\partial t}+b\cdot \nabla u+c\,u=0 & (0,T)\times\R^n  ,\\
      u(0,\cdot)=u_0 & \R^n.
      \end{cases}
$$
\end{prop}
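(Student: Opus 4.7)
The plan is a standard mollification and compactness argument. Let $(\rho_\epsilon)$ be a standard mollifier in the spatial variable, and set $b_\epsilon(t,\cdot)=b(t,\cdot)*\rho_\epsilon$, $c_\epsilon(t,\cdot)=c(t,\cdot)*\rho_\epsilon$ and $u_{0,\epsilon}=u_0*\rho_\epsilon$. After multiplying $b_\epsilon$ by a smooth spatial cutoff supported on a ball of radius $R_\epsilon\to\infty$ if necessary, we may assume $b_\epsilon$ is smooth with compact support in $x$, so that the flow $X_\epsilon$ generated by $b_\epsilon$ is globally defined and smooth. The classical method of characteristics then produces a smooth solution $u_\epsilon$ of the smoothed Cauchy problem on $[0,T]\times\R^n$, given by the representation $u_\epsilon(t,x)=u_{0,\epsilon}(Y_\epsilon(t,x))\exp\bigl(-\int_0^t c_\epsilon(s,X_\epsilon(s,Y_\epsilon(t,x)))\,ds\bigr)$, with $Y_\epsilon$ the inverse flow.

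The first a priori estimate is the $L^\infty$ bound. From the representation formula and $c_\epsilon\in L^1(0,T;L^\infty)$,
$$\|u_\epsilon(t,\cdot)\|_{L^\infty}\le \|u_0\|_{L^\infty}\exp\Bigl(\int_0^T\|c(s,\cdot)\|_{L^\infty}\,ds\Bigr),$$
uniformly in $\epsilon$. For the $L^p$ bound, since $u_\epsilon$ is smooth, multiplying the equation by $p|u_\epsilon|^{p-2}u_\epsilon$ and integrating by parts gives
$$\frac{d}{dt}\|u_\epsilon(t,\cdot)\|_{L^p}^p=\int_{\R^n}(\div b_\epsilon)\,|u_\epsilon|^p\,dx-p\int_{\R^n}c_\epsilon\,|u_\epsilon|^p\,dx.$$
Writing $\div b=B_1+B_2$ with $B_1\in L^1(0,T;L^1)$ and $B_2\in L^1(0,T;L^\infty)$, we bound
$$\int_{\R^n}(\div b_\epsilon)|u_\epsilon|^p\,dx\le \|(B_2)_\epsilon(t,\cdot)\|_{L^\infty}\|u_\epsilon(t,\cdot)\|_{L^p}^p+\|(B_1)_\epsilon(t,\cdot)\|_{L^1}\|u_\epsilon(t,\cdot)\|_{L^\infty}^p.$$
Inserting the uniform $L^\infty$ bound and applying Grönwall's inequality yields a uniform bound for $\|u_\epsilon\|_{L^\infty(0,T;L^p)}$.

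By Banach--Alaoglu, pass to a subsequence $u_\epsilon\overset{*}{\rightharpoonup} u$ in $L^\infty(0,T;L^p\cap L^\infty)$. Since $u_\epsilon$ is smooth, it satisfies the weak formulation
$$-\int_0^T\!\!\int_{\R^n}u_\epsilon\,\partial_t\varphi\,dx\,dt-\int_{\R^n}u_{0,\epsilon}\,\varphi(0,\cdot)\,dx-\int_0^T\!\!\int_{\R^n}u_\epsilon\,\div(b_\epsilon\varphi)\,dx\,dt+\int_0^T\!\!\int_{\R^n}c_\epsilon\,u_\epsilon\,\varphi\,dx\,dt=0$$
for every $\varphi\in \cC^\infty_c([0,T)\times\R^n)$. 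To pass to the limit, I expand $\div(b_\epsilon\varphi)=\varphi\,\div b_\epsilon+b_\epsilon\cdot\nabla\varphi$; on the (fixed compact) support of $\varphi$, the functions $b_\epsilon$, $\div b_\epsilon$ and $c_\epsilon$ converge strongly in $L^1([0,T]\times K)$ to $b$, $\div b$, $c$ respectively, while $u_\epsilon$ is bounded in $L^\infty$ and converges weakly-$*$. The standard pairing of weak-$*$ $L^\infty$ with strong $L^1$ convergence gives convergence of each product term to its expected limit, and $u_{0,\epsilon}\to u_0$ in $L^1_{loc}$. The identity in the limit shows that $u$ is a weak solution lying in $L^\infty(0,T;L^p\cap L^\infty)$.

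The main technical nuisance is ensuring that the truncation of $b_\epsilon$ at radius $R_\epsilon$ does not interfere with the a priori estimates or with the passage to the limit; but both estimates above are stable under such truncation since they are local in structure, and the support of any fixed test function is eventually contained in the region where $b_\epsilon=b*\rho_\epsilon$ has not been altered. Apart from this bookkeeping, the argument is routine.
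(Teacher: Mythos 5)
Your overall strategy -- mollify $b$, $c$, $u_0$, derive uniform $L^\infty$ and $L^p$ a priori bounds, extract a weak-$\ast$ convergent subsequence, and pass to the limit in the weak formulation -- is the same as the paper's. The paper, however, does not introduce a spatial cutoff on $b_\epsilon$: it observes that $\div b_\epsilon = (\div b)*\rho_\epsilon \in L^1(0,T;L^\infty)$ for each fixed $\epsilon>0$ and then invokes DiPerna--Lions (Prop.~2.1 and Thm.~2.2 of \cite{dl89}) to obtain directly a solution $u_\epsilon\in L^\infty(0,T;L^p\cap L^\infty)$ to the mollified problem, with the $L^p$ bound then derived from the clean identity $\partial_t\|u_\epsilon\|_{L^p}^p = \int (\div b_\epsilon)|u_\epsilon|^p\,dx - p\int c_\epsilon|u_\epsilon|^p\,dx$ in which $\div b_\epsilon=(B_1)_\epsilon+(B_2)_\epsilon$ exactly.

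The cutoff you introduce is not the harmless ``bookkeeping'' you claim, and this is where I see a genuine gap. Writing $\tilde b_\epsilon = \chi_{R_\epsilon} b_\epsilon$, you get
$$\div \tilde b_\epsilon = \chi_{R_\epsilon}\,\div b_\epsilon + b_\epsilon\cdot\nabla\chi_{R_\epsilon},$$
and the term $b_\epsilon\cdot\nabla\chi_{R_\epsilon}$ lives on the annulus $R_\epsilon\le|x|\le 2R_\epsilon$ with size roughly $|b_\epsilon|/R_\epsilon$. Proposition~\ref{p-exist} does \emph{not} assume the growth hypothesis \eqref{hyp-b-1} on $b$, so there is no control whatsoever on $\|b_\epsilon\|_{L^1(R_\epsilon\le|x|\le 2R_\epsilon)}$ or $\|b_\epsilon\|_{L^\infty(R_\epsilon\le|x|\le 2R_\epsilon)}$. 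Consequently, the extra divergence contribution
$$\int_{\R^n} (b_\epsilon\cdot\nabla\chi_{R_\epsilon})\,|u_\epsilon|^p\,dx$$
cannot be absorbed: it is not bounded by $\|B_1\|_{L^1}\|u_\epsilon\|_\infty^p$ nor by $\|B_2\|_{L^\infty}\|u_\epsilon\|_{L^p}^p$. Your assertion that ``both estimates are stable under such truncation since they are local in structure'' is wrong for the $L^p$ estimate: it is a \emph{global} energy identity over all of $\R^n$, and the annular boundary term genuinely enters it. The $L^\infty$ bound from the characteristics representation is indeed unaffected, but the $L^p$ a priori bound -- which is what gives the weak compactness in $L^\infty(0,T;L^p)$ -- is not justified. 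To repair the argument you should drop the cutoff and, as in the paper, obtain $u_\epsilon$ directly from DiPerna--Lions' existence theory for the smooth field $b_\epsilon$ with bounded divergence; alternatively, you would need to incorporate an additional decay hypothesis on $b$, which the statement does not provide.
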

\begin{proof}
We will follow the usual method of regularization. Let   $0\leq \rho\in \cC^{\infty}_c(\R^n)$ be such that $\int_{\R^n} \rho(x)\,dx=1$. For each $\ez>0$, set $\rho_\ez(x)=\frac 1{\ez^n}\rho(x/\ez)$, and
$b_\ez=b\ast\rho_\ez$, $c_\ez=c\ast\rho_\ez$, $u_{0,\ez}=u_0\ast\rho_\ez$.
Since $$\div b\in L^1(0,T;L^1)+ L^1(0,T;L^\infty),$$
we have for each $\ez>0$,
$$\div b_\ez=(\div b)\ast\rho_\ez\in L^1(0,T;L^\infty).$$
Therefore, $b_\ez$ and $c_\ez$ satisfy the requirements from DiPerna-Lions \cite[Proposition 2.1, Theorem 2.2]{dl89}.
Since $u_{0,\ez}\in L^p\cap L^\infty$, it follows that there exists a unique solution $u_\ez\in L^\infty(0,T;L^p\cap L^\infty)$ to the transport equation
$$
   \begin{cases}
    \dfrac{\partial u_\ez}{\partial t}+b_\ez\cdot \nabla u_\ez+c_\ez u_\ez=0 & (0,T)\times \R^n,\\
      u_\ez(0,\cdot)=u_{0,\ez} &   \R^n.
      \end{cases}
      $$
Moreover, we can bound $u_\ez$ in $L^\infty(0,T;L^\infty)$ as
\begin{equation}\label{infty-bdd}
\aligned
\|u_\ez\|_{L^\infty(0,T; L^\infty)}
&\le \|u_{0,\ez}\|_{L^\infty}\exp\left\{\int_0^T\|c_\ez(t)\|_{L^\infty}\,dt\right\}\\
&\le \|u_{0}\|_{L^\infty}\exp\left\{\int_0^T\|c(t)\|_{L^\infty}\,dt\right\}=:M.
\endaligned
\end{equation}
Therefore, by extracting a subsequence, $\{\ez_k\}_k$, we know that $u_{\ez_k}$ converges to some $u$ in the weak-$\ast$ topology of $L^\infty(0,T;L^\infty)$.\\
\\
Now, the smoothness allows us to say that
$$\frac{\partial |u_\ez|^p}{\partial t}+b_\ez\cdot \nabla |u_\ez|^p+p\,c_\ez \,|u_\ez|^p=0.$$
But we also know that $\int_{\R^n} \div (b_\ez |u_\ez|^p)\,dx=0$, since $b_\ez |u_\ez|^p\in W^{1,1}$. Thus, integrating on $\R^n$ we get that
\begin{equation}\label{regular-energy}
\frac{\partial}{\partial t}\int_{\R^n} |u_\ez|^p\,dx-\int_{\R^n} |u_\ez|^p\div b_\ez\,dx
+\int_{\R^n} pc_\ez |u_\ez|^p\,dx=0.
\end{equation}
Our assumptions on $\div b$ allow us to decompose $\div \,b=B_{1}+B_{2},$ where $B_1\in L^1(0,T;L^1)$ and $B_2\in L^1(0,T;L^\infty)$. By letting $B_{1,\ez}=B_1\ast \rho_\ez$ and $B_{2,\ez}=B_2\ast\rho_\ez$, we get from \eqref{regular-energy} that
$$\aligned
\|u_\ez(T)\|_{L^p}^p\leq &\, \|u_{0,\ez}\|_{L^p}^p+M^p\int_0^T\|B_{1,\ez}\|_{L^1}\,dt+\int_0^T\|B_{2,\ez}-p\,c_\ez\|_{L^\infty} \|u_\ez\|_{L^p}^p\,dt\\
\endaligned$$
We then see that
\begin{equation}\label{p-bdd}
\|u_\ez(T)\|_{L^p}^p\leq
\left\{\|u_{0}\|_{L^p}^p+M^p\int_0^T\|B_1\|_{L^1}\,dt\right\}\, \exp\left\{\int_0^T\|B_2-p\,c\|_{L^\infty}\,dt\right\},
\end{equation}
i.e., $\{u_\ez\}$ is uniformly bounded in $L^\infty(0,T;L^p)$. Hence, there exists a subsequence of $\{\ez_k\}_k$, $\{\ez_{k_j}\}_{k_j}$, such that
$u_{\ez_{k_j}}$ weakly converges to some $\tilde u\in L^\infty(0,T;L^p)$, if $p>1$.
For $p=1$, notice that, since $\{u_\ez\}\in L^\infty(0,T;L^\infty)$ with a uniform upper bound independent of $\ez$,
$\{u_\ez\}$ is weakly relative compact  in $L^\infty(0,T;L^1_{loc}).$ Therefore,
 there exists a subsequence of $\{\ez_k\}_k$, $\{\ez_{k_j}\}_{k_j}$, such that
$u_{\ez_{k_j}}$ weakly converges to some $\tilde u\in L^\infty(0,T;L^1)$.
%
By using a duality argument, it is easy to see that $u=\tilde u$ a.e.,
which  is the required solution. Moreover, from \eqref{infty-bdd} and \eqref{p-bdd}, we see that
\begin{eqnarray}\label{infty-bdd-solution}
\|u\|_{L^\infty(0,T; L^\infty)}\le \|u_{0}\|_{L^\infty}\exp\left\{\int_0^T\|c(t)\|_{L^\infty}\,dt\right\}.
\end{eqnarray}
and
\begin{eqnarray}\label{p-bdd-solution}
\|u\|_{L^\infty(0,T; L^p)}^p\leq \left\{\|u_{0}\|_{L^p}^p+M^p\int_0^T \|B_1\|_{L^1}\,dt\right\}
\exp\left\{\int_0^T\||B_2- p\,c|\|_{L^\infty}\,dt\right\},
\end{eqnarray}
which completes the proof.
\end{proof}

\noindent
The following commutator estimate is a special case of DiPerna-Lions \cite[Theorem 2.1]{dl89}.

\begin{lem}[DiPerna-Lions]\label{regularize}
Let $u\in L^\infty(0,T;L^\infty)$ be a solution to the transport equation
$$
\begin{cases}
\dfrac{\partial u}{\partial t}+b\cdot \nabla u+c\,u=0 &   (0,T)\times\R^n,\\
u(0,\cdot)=u_0 &  \R^n.
      \end{cases}
$$
Here $b\in L^1(0,T;W^{1,1}_{loc})$ and $c\in L^1(0,T;L^1_{loc})$. Let $0\le\rho\in \cC^\infty_c(\R^n)$
satisfy $\int_{\R^n}\rho\,dx=1$, and $\rho_\epsilon(x)=\ez^{-n}\rho(x/\ez)$. Then, $u_\ez=u\ast\rho_\epsilon$ satisfies
$$\frac{\partial u_\epsilon}{\partial t}+b\cdot\nabla u_\epsilon+c\,u_\epsilon=r_\epsilon,$$
where $r_\epsilon\to 0$ in $L^1(0,T;L^1_{loc})$ as $\epsilon\to 0$.
\end{lem}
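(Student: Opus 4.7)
The plan is to carry out the classical DiPerna--Lions mollification/commutator argument, extended to account for the zeroth-order term $cu$. Applying $\ast\,\rho_\ez$ to the equation $\partial_t u+b\cdot\nabla u+cu=0$ produces
$$\partial_t u_\ez+(b\cdot\nabla u)*\rho_\ez+(cu)*\rho_\ez=0,$$
so that, by the very definition of $r_\ez$,
$$r_\ez=\bigl[b\cdot\nabla u_\ez-(b\cdot\nabla u)*\rho_\ez\bigr]+\bigl[c\,u_\ez-(c\,u)*\rho_\ez\bigr]=:R_\ez+S_\ez.$$
I would treat the two commutators separately.

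The piece $S_\ez$ is a routine zeroth-order commutator. I would approximate $c$ in $L^1(0,T;L^1_{loc})$ by smooth compactly supported functions $c_n$, split
$$S_\ez=(c-c_n)\,u_\ez+\bigl[c_n u_\ez-(c_n u)*\rho_\ez\bigr]-((c-c_n)u)*\rho_\ez,$$
bound the first and third pieces crudely by $\|u\|_{L^\infty}|c-c_n|$ and $\|u\|_{L^\infty}(|c-c_n|*\rho_\ez)$ respectively, and observe that the middle piece is $O(\ez)$ uniformly on compact sets for each fixed $n$ by smoothness of $c_n$. A standard diagonal argument then yields $S_\ez\to 0$ in $L^1(0,T;L^1_{loc})$.

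The essential part is the DiPerna--Lions commutator $R_\ez$. Integrating by parts inside $(b\cdot\nabla u)*\rho_\ez$ to move the gradient off $u$ and then performing the change of variables $y=x-\ez z$ yields the key identity
$$R_\ez(x)=\int_{\R^n}u(x-\ez z)\,\frac{b(x)-b(x-\ez z)}{\ez}\cdot\nabla\rho(z)\,dz+(u\,\div b)*\rho_\ez(x).$$
Formally, the first summand converges to $u(x)\sum_{i,j}\partial_j b_i(x)\int z_j\,\partial_i\rho(z)\,dz=-u(x)\,\div b(x)$, using $\int z_j\,\partial_i\rho(z)\,dz=-\delta_{ij}$ (integration by parts) together with the $W^{1,1}_{loc}$ differentiability of $b$, while the second summand converges to $u(x)\,\div b(x)$ by a Lebesgue-point argument. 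The two limits cancel, and $R_\ez\to 0$.

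The main obstacle is upgrading these formal pointwise limits to convergence in $L^1(0,T;L^1_{loc})$ when $b$ is merely $W^{1,1}_{loc}$. The standard device is the uniform difference-quotient bound
$$\Bigl\|\tfrac{b(\cdot,t)-b(\cdot-\ez z,t)}{\ez}\Bigr\|_{L^1(K)}\le C|z|\,\|\nabla b(\cdot,t)\|_{L^1(K')}$$
for compact $K\subset K'\subset\R^n$, which, together with the compact support of $\nabla\rho$, furnishes an integrable dominating function for the integrand defining $R_\ez$. This allows dominated convergence in $x$ at each fixed $t$, and then in $t$ (invoking $b\in L^1(0,T;W^{1,1}_{loc})$ and the uniform $L^\infty$ bound on $u$), delivering $r_\ez\to 0$ in $L^1(0,T;L^1_{loc})$ as required.
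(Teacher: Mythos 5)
The paper gives no proof of this lemma; it simply cites \cite[Theorem 2.1]{dl89}, so your task amounts to reconstructing the DiPerna--Lions commutator argument, and you get the structure essentially right. The decomposition $r_\ez=R_\ez+S_\ez$, the identity
\[
R_\ez(x)=\int_{\R^n} u(x-\ez z)\,\frac{b(x)-b(x-\ez z)}{\ez}\cdot\nabla\rho(z)\,dz+(u\,\div b)\ast\rho_\ez(x),
\]
the formal cancellation using $\int z_j\,\partial_i\rho(z)\,dz=-\delta_{ij}$, the difference-quotient estimate $\|(b(\cdot,t)-b(\cdot-\ez z,t))/\ez\|_{L^1(K)}\le C|z|\,\|\nabla b(\cdot,t)\|_{L^1(K')}$, and the routine density treatment of $S_\ez$ are all the correct ingredients.

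There is, however, a genuine gap in the final step. The difference-quotient bound furnishes a \emph{uniform $L^1$ estimate} on (the first piece of) $R_\ez$, not a pointwise a.e.\ dominating function, so it cannot be plugged into the dominated convergence theorem ``in $x$''. Moreover $u(x-\ez z)$ need not converge pointwise a.e.\ to $u(x)$ when $u$ is merely $L^\infty$, and a uniform $L^1$-bound together with a formal pointwise limit does not by itself yield convergence in $L^1$ --- that is precisely the failure mode that the DiPerna--Lions scheme is designed to circumvent. The mechanism they actually use, and which is needed here, is \emph{density plus a uniform commutator estimate}: (i) verify directly that $R_\ez^{(b)}\to 0$ in $L^1(0,T;L^1_{loc})$ when $b$ is smooth (Taylor expansion of the difference quotient, then the $z$-integral against $\nabla\rho$, no subtleties); (ii) use the difference-quotient bound to obtain the uniform estimate $\|R_\ez^{(b)}\|_{L^1(0,T;L^1(K))}\le C\,\|u\|_{L^\infty}\,\|\nabla b\|_{L^1(0,T;L^1(K'))}$ for arbitrary $b\in L^1(0,T;W^{1,1}_{loc})$; (iii) approximate $b$ by smooth $b_n$ in $L^1(0,T;W^{1,1}_{loc})$, split $R_\ez^{(b)}=R_\ez^{(b_n)}+R_\ez^{(b-b_n)}$, and conclude
\[
\limsup_{\ez\to 0}\|R_\ez^{(b)}\|_{L^1(0,T;L^1(K))}\le C\,\|u\|_{L^\infty}\,\|\nabla(b-b_n)\|_{L^1(0,T;L^1(K'))}\xrightarrow[n\to\infty]{}0.
\]
With this substitution in your last paragraph, the argument coincides with the standard proof.
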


\noindent
Lemma \ref{regularize} above allows us to prove uniqueness when the initial value is in $L^p\cap L^\infty$.

\begin{prop}\label{p-unique}
Let $p\in [1,\infty)$ and assume that  $b\in L^1(0,T;W^{1,1}_{loc})$ satisfies \eqref{hyp-b-1} and
\eqref{hyp-b-2}. Assume also that $c\in L^1(0,T;L^\infty)$. Then, for every $u_0\in L^\infty\cap L^p$ the Cauchy problem
$$
\begin{cases}
    \dfrac{\partial u}{\partial t}+b\cdot \nabla u+cu=0 &  (0,T)\times \R^n ,\\
      u(0,\cdot)=u_0 &  \R^n,
      \end{cases}
$$
admits a unique weak solution $u\in L^\infty(0,T;L^p\cap L^\infty)$ .
\end{prop}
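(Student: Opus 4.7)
Existence is immediate from Proposition \ref{p-exist}: hypothesis \eqref{hyp-b-2} implies $\div b\in L^1(0,T;L^1)+L^1(0,T;L^\infty)$, since $\Exp(\frac{L}{\log L})\hookrightarrow L^1$ on $\R^n$ (because $P(t)\ge t$ for $0<t\le 1$ and $P(t)\gg t$ for $t\ge 1$). Thus Proposition \ref{p-exist} produces a solution $u\in L^\infty(0,T;L^p\cap L^\infty)$ satisfying \eqref{infty-bdd-solution}--\eqref{p-bdd-solution}. For uniqueness, by linearity it suffices to show that $u_0\equiv 0$ forces $u\equiv 0$; my strategy is to derive an Osgood-type estimate for $y(t):=\|u(t,\cdot)\|_{L^p}^p$ via renormalization together with the Orlicz duality of Lemma \ref{duality}.

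First, regularize through convolution: Lemma \ref{regularize} gives $u_\ez=u\ast\rho_\ez$ satisfying $\partial_t u_\ez+b\cdot\nabla u_\ez+c\,u_\ez=r_\ez$ with $r_\ez\to 0$ in $L^1(0,T;L^1_{loc})$. I would renormalize with $\beta(s)=|s|^p$ (for $p=1$, work with $\beta_\delta(s)=\sqrt{s^2+\delta^2}-\delta$ and let $\delta\to 0$ afterwards) to obtain, at the smooth level,
$$\partial_t|u_\ez|^p+b\cdot\nabla|u_\ez|^p+p\,c\,|u_\ez|^p=p\,|u_\ez|^{p-2}u_\ez\,r_\ez.$$
Testing against a radial spatial cutoff $\psi_R$ and integrating in time, sending $\ez\to 0$ kills the right-hand side (since $|u_\ez|^{p-1}$ is uniformly bounded and $r_\ez\to 0$ locally in $L^1$), and sending $R\to\infty$ kills the flux term $\int|u|^p\,b\cdot\nabla\psi_R$ via \eqref{hyp-b-1} combined with $|u|^p\in L^\infty(0,T;L^1)$. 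This yields the global renormalized identity
$$\int_{\R^n}|u(t,x)|^p\,dx=\int_0^t\int_{\R^n}|u|^p(\div b-p\,c)\,dx\,ds.$$

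Now decompose $\div b=B_1+B_2$ with $B_1\in L^1(0,T;\Exp(\tfrac{L}{\log L}))$ and $B_2\in L^1(0,T;L^\infty)$, and set $M:=\|u_0\|_{L^\infty}\exp\{\int_0^T\|c(s)\|_{L^\infty}\,ds\}$, so that $\|u\|_{L^\infty(0,T;L^\infty)}\le M$. Applying Lemma \ref{duality} to $f=|u(s,\cdot)|^p$ (with $\|f\|_{L^1}=y(s)$ and $\|f\|_{L^\infty}\le M^p$), one obtains
$$\int_{\R^n}|u|^p|B_1|\,dx\le C\,\|B_1(s,\cdot)\|_{\Exp(L/\log L)}\,\Phi(y(s)),$$
where $\Phi(y)=y\,[\log(e+M^p)+|\log y|]\,[\log\log(e^e+M^p)+|\log|\log y||]$. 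Combined with the trivial bounds involving $B_2$ and $c$, this produces
$$y(t)\le \int_0^t\bigl(p\|c(s)\|_{L^\infty}+\|B_2(s)\|_{L^\infty}\bigr)y(s)\,ds+C\int_0^t\|B_1(s)\|_{\Exp(L/\log L)}\Phi(y(s))\,ds.$$

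Since $y(0)=0$ and $\Phi$ satisfies the Osgood condition $\int_{0^+}dy/\Phi(y)=+\infty$ (a primitive of $1/\Phi$ behaves like $\log\log\log(1/y)$), the classical Osgood uniqueness criterion forces $y\equiv 0$, which ends the proof. The main obstacle I expect is the rigorous justification of the globally-integrated renormalized identity: one must simultaneously handle the limits $\ez\to 0$ (commutator vanishing via Lemma \ref{regularize}) and $R\to\infty$ (vanishing of the flux at infinity via \eqref{hyp-b-1}), both of which are by now standard DiPerna--Lions technology. The genuinely new ingredient over the classical bounded-divergence setting is the Orlicz duality of Lemma \ref{duality}, whose double-logarithmic loss is exactly what keeps $\Phi$ inside the Osgood class.
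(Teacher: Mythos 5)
Your proof follows essentially the same route as the paper: existence via Proposition~\ref{p-exist}, then for uniqueness DiPerna--Lions regularization plus spatial cutoff to derive the renormalized $L^p$ identity, the Orlicz duality of Lemma~\ref{duality}, and an Osgood-type integral inequality for $y(t)=\|u(t,\cdot)\|_{L^p}^p$. The only cosmetic difference is that the paper carries out the Osgood argument by hand --- constructing an explicit comparison function $\alpha^\ast$ and first restricting to a short time interval $(0,T_1]$ on which $y(t)<\exp\{-e^{e+M}\}$, so that the triple-log factors are monotone and $\log(e+M)\leq|\log y|$, $\log\log(e^e+M)\leq|\log|\log y||$ can be absorbed at the cost of a factor $4$ --- whereas you invoke the classical Osgood criterion directly; to make that rigorous you need the same observation, namely replace $\Phi$ by its nondecreasing envelope near $0$ or argue on a short time interval where $y$ stays small, since $\Phi$ as written is not monotone (and the inner $|\log|\log y||$ degenerates near $y=1$).
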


\begin{proof} Since $\Exp(\frac{L}{\log L})\subset L^p$ for every $p\in [1,\infty)$, we know by Proposition \ref{p-exist} that there exists a weak solution $u\in L^\infty(0,T;L^p\cap L^\infty)$. In order to get uniqueness, it suffices to assume that $u_0\equiv 0$, because the equation is linear. We start by regularizing the Cauchy problem as in Lemma \ref{regularize} so we get a regularized problem,
$$
\begin{cases}
\dfrac{\partial u_\ez}{\partial t}+b\cdot \nabla u_\ez+cu_\ez=r_\ez &(0,T)\times \R^n ;\\
 u_\ez(0,\cdot)= 0 & \R^n.
 \end{cases}$$
Also, $r_\ez\to 0$ as $\epsilon\to 0$ in the $L^1(0,T;L^1_{loc})$ sense, by Lemma \ref{regularize}. Now, for each $R>0$, let $\psi_R\in \cC^\infty_c(\R^n)$ be a cutoff function, so that
\begin{equation}\label{cutoff}
\aligned
0\leq \psi_R\leq 1, \,\,\,&\psi_R(x)=1\text{ whenever }|x|\leq R,\\
&\psi_R(x)=0\text{ whenever }|x|\geq 2R,\,\,\,\text{and }|\nabla\psi_R(x)|\leq\frac{C}{R}.
\endaligned\end{equation}
By noticing that
\begin{equation}
\frac{\,d}{\,dt}\int_{\R^n} |u_\ez|^p\psi_R\,dx+\int_{\R^n}b\cdot\nabla |u_\ez|^p\psi_R\,dx+\int_{\R^n} pc |u_\ez|^p\psi_R\,dx=\int_{\R^n} r_\ez p|u_\ez|^{p-1}\psi_R\,dx,
\end{equation}
and integrating over time, we see that for every $t\leq T$
\begin{eqnarray*}
&&\int_{\R^n}|u_\ez(t,\cdot)|^p\,\psi_R \,dx\\
&&=\int^t_0\int_{\R^n}\left(\div  b-pc\right) |u_\ez|^p\psi_R\,dx\,ds+\int_0^t\int_{\R^n} b\cdot \nabla \psi_R|u_\ez|^p\,dx\,ds+\int_0^t\int_{\R^n} r_\ez p|u_\ez|^{p-1}\psi_R\,dx\,ds.
\end{eqnarray*}
By Lemma \ref{regularize} and dominated convergence theorem, letting $\ez\to 0$ yields
\begin{equation}\label{energy-control-3}
\int_{\R^n}|u(t,\cdot)|^p\,\psi_R\,dx=\int^t_0\int_{\R^n}\left(\div  b-pc\right) |u|^p\,\psi_R\,dx\,ds+\int_0^t\int_{\R^n} b\cdot \nabla \psi_R\,|u|^p\,dx\,ds.
\end{equation}
Using the assumption \eqref{hyp-b-1} on $b$, and the facts $|u|^p\in L^\infty(0,T;L^1\cap L^\infty)$ and $|\nabla \psi_R|\leq C/R$, one obtains
$$\lim_{R\to\infty}\left|\int_0^t\int_{\R^n} b\cdot
\nabla \psi_R|u|^p\,dx\,ds\right|\le \lim_{R\to\infty}\left|\int_0^t\int_{B(0,2R)\setminus B(0,R)}
 \frac{|b(s,x)|}{1+|x|}|u|^p\,dx\,ds\right|=0,$$
which kills the second term on the right hand side of \eqref{energy-control-3}. For the first term,  write
$$\div \,b=B_{1}+B_{2},$$
where $B_1\in L^1(0,T;\Exp(\frac{L}{\log L}))$ and $B_2\in L^1(0,T;L^\infty)$. Letting $R\to\infty$ in \eqref{energy-control-3} yields
\begin{eqnarray}\label{energy-control-4}
\int_{\R^n}|u(t,x)|^p\,dx&&\le \int^t_0\int_{\R^n}[|\div  b|+ p|c|] |u|^p\,dx\,ds\nonumber\\
&&\le \int^t_0\int_{\R^n}|B_1| |u|^p\,dx\,ds+
\int^t_0\left\|[|B_2|+pc\right\|_{L^\infty}\int_{\R^n}|u|^p\,dx\,ds
\end{eqnarray}
Recall that $B_1\in L^1(0,T;\Exp(\frac{L}{\log L}))\subset L^1(0,T;L^p)$ for each $p\in [1,\infty)$.
This, together with $u\in L^\infty(0,T;L^p\cap L^\infty)$, further implies that there exists $T_1>0$, such that
\begin{equation}\label{energy-control-1}
\int_{\R^n}|u(t,x)|^p\,dx<\exp\left\{-\exp\left\{e+\|u\|_{L^\infty(0,T; L^\infty)}\right\}\right\}
\end{equation}
for each $t\in (0,T_1)$. For convenience, in what follows we denote by $\alpha(t)$, $\beta_1(t)$, $\beta_2(t)$ the quantities $\|u(t,\cdot)\|_{L^p}^p$, $\|B_1\|_{\Exp(\frac{L}{\log L})}$ and $\||B_2|+pc\|_{L^\infty}$, respectively.
Denote $\|u\|_{L^\infty(0,T;L^\infty)}$ by $M$. From the first estimate of Lemma \ref{duality}, we find that
\begin{equation}\label{holderbd}
\int_{\R^n}|B_1| |u|^p\,dx\leq 2\|B_1\|_{\Exp(\frac{L}{\log L})}\||u|^p\|_{L\log L\log\log L}.
\end{equation}
By the second estimate of Lemma \ref{duality}, the factor $\||u|^p\|_{L\log L\log\log L}$ is bounded by
\begin{eqnarray}\label{holder-estimate}
2e\|B_1\|_{\Exp(\frac{L}{\log L})}\,\alpha(t)\bigg(\log(e+M)+|\log \alpha(t)|\bigg)
\bigg(\log\log(e^e+M)+|\log|\log( \alpha(t))||\bigg).
\end{eqnarray}
Notice that by \eqref{energy-control-1} we have
$$\log(e+M)\le \left|\log (\alpha(t))\right|=\log\frac{1}{\alpha(t)}$$
and
 $$\log\log(e^e+M)\le \left|\log\left(|\log \alpha(t)|\right)\right|=\log\log\frac 1{\alpha(t)}$$
 for $t\in (0,T_1)$. This fact, together with the inequalities \eqref{energy-control-3}, \eqref{holderbd} and \eqref{holder-estimate}, gives
\begin{equation}\label{alpha-estimate}
\aligned
\alpha(t)
&\leq 16e\int_{0}^t \beta_1(s) \alpha(s)\log \frac1{\alpha(s)}\log\log\frac1{\alpha(s)}+\beta_2(s)\alpha(s)\,ds \\
&\leq 16e\int_{0}^t \beta(s) \alpha(s)\log \frac1{\alpha(s)}\log\log\frac1{\alpha(s)}\,ds,
\endaligned
\end{equation}
where we denote by $\beta(s)$ the quantity $\beta_1(s)+\beta_2(s)$. We will now use a Gronwall type argument. For each $s\in (0,T]$, let
$$\alpha^\ast(s)=\exp\left\{-\exp\left\{\exp\left\{\log\log\log\frac 1{\epsilon}-16e\int_0^s\beta(s)\,ds\right\}\right\}\right\},$$
where $\epsilon>0$ is chosen small enough so that
$$\alpha^\ast(T)=\exp\left\{-\exp\left\{\exp\left\{\log\log\log\frac 1{\epsilon}-16e\int_0^T\beta(s)\,ds\right\}\right\}\right\}<\exp\left\{-\exp\{e+M\}\right\}.$$
From the definition, we see that $\alpha^\ast$ is Lipschitz continuous and increasing on $[0,T]$. Moreover, for every $t\in [0,T]$ it holds that
$$\alpha^\ast(t)=\epsilon+16e\int_0^t \beta(s) \alpha^\ast(s)\log \frac 1{\alpha^\ast(s)} \log\log\frac 1{\alpha^\ast(s)}\,ds.$$
Also, we see from \eqref{energy-control-1} that $\alpha(t)$ takes values on $[0,\exp\{-e^{e+M})$ if $t\in (0,T_1)$, and the function $s\mapsto s|\log s||\log(|\log s|)|$ is increasing on that interval. From this, the definition of $\alpha^\ast$ and \eqref{alpha-estimate}, we conclude that for each $t\in [0,T_1]$,
$$0\le \alpha(t)\le \alpha^\ast(t)\le \exp\left\{-\exp\left\{\exp\left\{\log\log\log\frac 1{\epsilon}-16e\int_0^T\beta(s)\,ds\right\}\right\}\right\}. $$
By letting $\epsilon\to 0$, we conclude that $\alpha(t)\equiv 0$ for each $t\in (0,T_1],$
which means
$$u(t,x)\equiv 0\quad \mathrm{in } \ (0,T_1)\times \R^n.$$
The proof is therefore completed.
\end{proof}

\noindent
We now give in the following proposition an apriori estimate for solutions $u\in L^\infty(0,T;L^\infty)$
to the transport equation subject to a vanishing initial value. This estimate is the key of the proof of Theorem \ref{main}.

\begin{prop}\label{3-energy}
Let $T>0$, and assume that  $b\in L^1(0,T;W^{1,1}_{loc})$ satisfies \eqref{hyp-b-1} and
$$\div  b\in L^1(0,T;L^1\cap L^2)+L^1(0,T;L^\infty).$$
Let $u\in L^\infty(0,T;L^\infty)$ be a weak solution of
$$
\begin{cases}
\dfrac{\partial u}{\partial t}+b\cdot \nabla u=0 &  (0,T)\times \R^n,\\
u(0,\cdot)=0 &   \R^n.
      \end{cases}
$$
Then $u\in L^\infty(0,T;L^2)$.
\end{prop}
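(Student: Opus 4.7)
My approach is duality with the adjoint (backward) continuity equation. Fix $T_0\in(0,T]$. For each $\phi\in\cC^\infty_c(\R^n)$ I would construct a weak solution $v\in L^\infty(0,T_0;L^1\cap L^\infty)$ of
\[
\partial_t v+\div(bv)=0 \text{ on } (0,T_0)\times\R^n,\qquad v(T_0,\cdot)=\phi.
\]
After the time reversal $s=T_0-t$, $w(s,x)=v(T_0-s,x)$, $\tilde b(s,x)=-b(T_0-s,x)$, this becomes the forward continuity equation $\partial_s w+\div(\tilde b\,w)=0$ with $w(0,\cdot)=\phi$, equivalently the transport+reaction equation with reaction coefficient $\div\tilde b\in L^1(L^1\cap L^2)+L^1(L^\infty)$. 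Although this does not fit Proposition \ref{p-exist} exactly (which requires $c\in L^1(L^\infty)$), a variant of the argument works: mass conservation gives $\|v\|_{L^\infty(L^1)}\leq\|\phi\|_{L^1}$ uniformly, and the $L^1\cap L^2$ part of the reaction enters the $L^p$-energy estimate only through the factor $\|\div b\|_{L^1(L^1)}\,\|v\|_{L^\infty}^p$ (as in the proof of Proposition \ref{p-exist}), which can be controlled via an Osgood-type bound on $\|v\|_{L^\infty}$.

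By the multiplicative property (Proposition \ref{commutator-bv-2}), the product $uv$ then satisfies the conservation law $\partial_t(uv)+\div(b\,uv)=0$ distributionally. To rigorize, I would introduce a smooth cutoff $\psi_R$ (with $\psi_R=1$ on $B(0,R)$, $\psi_R=0$ outside $B(0,2R)$, $|\nabla\psi_R|\leq C/R$) and compute
\[
\frac{d}{dt}\int uv\,\psi_R\,dx=\int uv\,b\cdot\nabla\psi_R\,dx,
\]
the right-hand side being bounded by $CM\int_{|x|\geq R}\frac{|b|}{1+|x|}\,|v|\,dx$ in view of $(1+|x|)|\nabla\psi_R|\leq C$ on $\supp\nabla\psi_R$. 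Using $v\in L^1\cap L^\infty$ and the splitting $\frac{|b|}{1+|x|}\in L^1(L^1)+L^1(L^\infty)$, this quantity is integrable in time and tends to zero as $R\to\infty$. Integrating in time and passing to the limit yields
\[
\int u(T_0)\,\phi\,dx=\int u(0)\,v(0)\,dx=0,
\]
since $u(0)\equiv 0$.

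Because $\phi\in\cC^\infty_c(\R^n)$ is arbitrary and $u(T_0)\in L^\infty\subset L^1_{loc}$, this in fact gives $u(T_0)=0$ almost everywhere, and hence $u\in L^\infty(0,T;L^2)$ trivially (the argument thus recovers uniqueness, which is stronger than what the proposition states). The principal obstacle is the construction of the dual solution $v$ with a uniform $L^\infty$ bound: the naive convolution approximation of $b$ causes $\|\div b^n\|_{L^\infty}$ to blow up, so that the pointwise exponential estimate $\|v^n\|_{L^\infty}\leq\|\phi\|_{L^\infty}\exp\bigl(\int\|\div b^n\|_{L^\infty}\bigr)$ is useless in the limit. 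Overcoming this requires an Osgood-type inequality exploiting both the $L^1$ and $L^2$ integrability of the unbounded part of $\div b$, in a simpler form than the $\log\log$-Gronwall argument used in the proof of Proposition \ref{p-unique}.
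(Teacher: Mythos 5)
Your high-level strategy (duality with a backward problem, then a multiplicative identity for $uv$, then a cutoff argument to kill the boundary terms) is the same as the paper's, but the specific adjoint problem you choose creates an obstruction that you correctly identify but cannot actually resolve. You propose to solve the full adjoint continuity equation $\partial_t v + \div(bv)=0$, equivalently a transport equation with reaction coefficient $c=\div b$. Since $\div b$ is merely in $L^1(0,T;L^1\cap L^2)+L^1(0,T;L^\infty)$, the $L^\infty$ bound on $v$ genuinely fails: along characteristics $v(t,X(t,x))=\phi(x)/J(t,x)$ with $J=\exp\{\int_0^t\div b(X)\,ds\}$, and with only $L^1\cap L^2$ control on the unbounded part of $\div b$ there is no obstruction to $J\to 0$ along many trajectories. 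This is not a technical shortcoming to be patched by an Osgood inequality; the quantity $\|v(t)\|_{L^\infty}$ has no a priori bound in terms of $\|\div b\|_{L^1(L^1\cap L^2)}$, and the $L^p$-energy argument you appeal to (controlling $\int |\div b|\,|v|^p$ by $\|\div b\|_{L^1(L^1)}\|v\|_{L^\infty}^p$) presupposes the very $L^\infty$ bound you are trying to establish, so it is circular.

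The paper's key idea, which you are missing, is not to put the whole divergence into the dual equation. Split $\div b=B_1+B_2$ with $B_1\in L^1(L^1\cap L^2)$ and $B_2\in L^1(L^\infty)$, and solve only
$$\partial_t v + b\cdot\nabla v + B_2\,v=0,\qquad v(T_0,\cdot)=\chi_K u(T_0,\cdot),$$
which Proposition \ref{p-exist} handles directly since the reaction term $B_2$ is in $L^1(L^\infty)$; this gives $v\in L^\infty(0,T_0;L^1\cap L^\infty)$ (in particular $v\in L^\infty(L^2)$ with a quantitative bound). Because the $v$-equation carries only $B_2$ rather than $\div b$, the product $uv$ does \emph{not} satisfy a conservation law: after the mollification and integration by parts one instead gets $\int u^2(T_0)\chi_K\le\int_0^{T_0}\int|uvB_1|$, and the uncancelled piece $B_1$ is precisely what is controlled by H\"older using $B_1\in L^1(L^2)$ and $v\in L^\infty(L^2)$, followed by Young's inequality and Fatou. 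This is also why the paper tests against $\chi_K u(T_0)$ rather than an arbitrary $\phi\in\cC^\infty_c$ and concludes only $u\in L^\infty(0,T;L^2)$, not $u\equiv 0$: the remaining term $\int|uvB_1|$ does not vanish, so the proposition is genuinely weaker than uniqueness and serves only as an intermediate step toward Proposition \ref{p-unique}.
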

\begin{proof}
Once more, we write $\div \,b=B_{1}+B_{2},$ where now $B_1\in L^1(0,T;L^1\cap L^2)$ and $B_2\in L^1(0,T;L^\infty)$. Let us begin with the following backward transport problem,
$$
\begin{cases}
\dfrac{\partial v}{\partial t}+b\cdot \nabla v+B_2 \,v=0 &  (0,T_0)\times \R^n,\\
v(T_0,\cdot)= \chi_K\,u(T_0,\cdot)  &  \R^n,
\end{cases}
$$
where $T_0\in (0,T]$, and $\chi_K$ is the characteristic function of an arbitrary compact set $K\subset \R^n$. By Proposition \ref{p-exist}, we see that this problem admits a solution
$v\in L^\infty(0,T_0;L^1\cap L^\infty)$, because certainly $\chi_K\,u(T_0,\cdot)$ belongs to  $L^1\cap L^\infty$. Now we regularize both backward and forward problems with the help of a mollifier $0\le\rho\in \cC^\infty_c(\R^n)$, and obtain two regularized problems,
$$
\begin{cases}
    \dfrac{\partial u_\ez}{\partial t}+b\cdot \nabla u_\ez=r_{u,\ez} & (0,T)\times\R^n ,\\
      u_\ez(0,\cdot)=0 &   \R^n;
      \end{cases}
      $$
and
$$
\begin{cases}
\dfrac{\partial v_\ez}{\partial t}+b\cdot \nabla v_\ez+B_2\, v_\ez=r_{v,\ez} &   (0,T_0)\times\R^n,\\
      v_\ez(T_0,\cdot)=(\chi_K\,u(T_0,\cdot))\ast\rho_\ez &   \R^n;
\end{cases}$$
where $u_\ez=u\ast \rho_\ez$, $v_\ez=v\ast \rho_\ez$,  and $r_{u,\ez}$, $r_{v,\ez}$ converge to 0 in the $L^1(0,T;L^1_{loc})$ sense, see Lemma \ref{regularize}. Choose now $\psi_R\in \cC_c^\infty(\R^n)$ with $\psi_R\equiv 1$ on $B(0,R)$, $\supp \psi_R\subset B(0,2R)$ and $|\nabla \psi_R|\le C/R$. Then we multiply the first equation by $v_\ez\psi_R$, and integrate over time and space. We conclude that
\begin{eqnarray*}
0&&=\int_0^{T_0}\int_\R^n \left(\frac{\partial u_\ez}{\partial t}+b\cdot \nabla u_\ez-r_{u,\ez}\right)v_\ez\psi_R\,dx\,dt\\
&&=\int_{\R^n} u_\epsilon(T_0,x) (\chi_K\,u(T_0,\cdot))\ast\rho_\ez(x)\,\psi_R(x)\,dx\\
&&\quad-\int_0^{T_0}\int_\R^n \left[u_\ez \left(\frac{\partial v_\ez}{\partial t}
+b\cdot \nabla v_\ez+v_\ez\div b \right)\psi_R
+u_\ez v_\ez b\cdot\nabla \psi_R+r_{u,\ez} v_\ez\psi_R\right]\,dx\,dt\\
&&=\int_{\R^n} u_\epsilon(T_0,x) (u(T_0)\chi_K)\ast\rho_\ez(x)\psi_R(x)\,dx\\
&&\quad-\int_0^{T_0}\int_{\R^n}\left[\left(u_\ez v_\ez B_1  +v_\ez r_{u,\ez}+u_\ez r_{v,\ez}\right)\psi_R+u_\ez v_\ez b\cdot \nabla \psi_R\right]\,dx\,dt.
\end{eqnarray*}
Notice that $u_\ez\in L^\infty(0,T;L^\infty)$,
$v_\ez\in L^\infty(0,T;L^1\cap L^\infty)$ for each $\epsilon>0$, and
$r_{u,\ez}, r_{v,\ez}\to 0$ as $\epsilon\to 0$ in $L^1(0,T;L^1_{loc})$ by Lemma \ref{regularize}.
Letting $\ez\to 0$ in the above equality yields that
\begin{eqnarray*}
\int_{\R^n}u^2(T_0,x)\chi_K(x)\psi_R(x)\,dx\le \int_0^{T_0}\int_{\R^n} |u v B_1|\psi_R+|u v| |b\cdot \nabla \psi_R|\,dx\,dt.
\end{eqnarray*}
Using the fact $\frac{b(t,x)}{1+|x|}\in L^1(0,T;L^1)+L^1(0,T;L^\infty)$, $uv\in L^\infty(0,T;L^1\cap L^\infty)$, and letting $R\to \infty$,
yields
\begin{eqnarray}\label{k-estimate}
\int_{\R^n} u^2(T_0,x)\chi_K(x)\,dx\le \int_0^{T_0}\int_{\R^n} |u v B_1|\,dx\,dt.
\end{eqnarray}
Denote by $M,\,\widetilde M$ the quantities  $\|u\|_{L^\infty(0,T_0;L^\infty)}$
and $\|B_1\|_{L^1(0,T;L^1)}+\|B_1\|_{L^1(0,T;L^2)}$, respectively.
 Since $v$ is a solution to the transport equation, by using \eqref{p-bdd-solution}, we see that
\begin{eqnarray*}
\|v\|_{L^\infty(0,T;L^2)}^2&&\le \left\{\|u(T_0,\cdot)\chi_K\|_{L^2}^2+M^2\int_0^{T_0}
\int_{\R^n}|B_1|\,dx\,dt\right\} \exp\left\{2\int_0^{T_0}\|B_2\|_{L^\infty}\,dt\right\},
\end{eqnarray*}
By this, the fact $T_0\in (0,T]$, and using the H\"older inequality, we get from \eqref{k-estimate} that
\begin{eqnarray*}
\int_{\R^n} u^2(T_0,x)\chi_K(x)\,dx&&\le M\int_0^{T_0}\int_{\R^n} |v| |B_1|\,dx\,dt\le M\int_0^{T_0}\|v\|_{L^2}\|B_1\|_{L^2}\,dt\\
&&\le  M\left(\int_0^{T_0}\|B_1\|_{L^2}\,dt\right) \exp\left\{\int_0^{T_0}\|B_2\|_{L^\infty}\,dt\right\}\\
&&\quad  \times\left\{\|u(T_0,\cdot)\chi_K\|_{L^2}^2+M^2\int_0^{T_0}
\int_{\R^n}|B_1|\,dx\,dt\right\}^{1/2}\nonumber\\
&& \le M\widetilde M\exp\left\{\int_0^{T}\|B_2\|_{L^\infty}\,dt\right\}\left\{\|u(T_0,\cdot)\chi_K\|_{L^2}+M\left(\widetilde M\right)^{1/2}\right\}.
\end{eqnarray*}
An application of the Young inequality gives us that
\begin{eqnarray*}
\int_{\R^n} u^2(T_0,x)\chi_K(x)\,dx&& \le 2M^2\left((\widetilde M)^{3/2}+(\widetilde M)^{2}\right) \exp\left\{2\int_0^{T}\|B_2\|_{L^\infty}\,dt\right\},
\end{eqnarray*}
where the right hand side is independent of $K$ and $T_0$. By using the Fatou Lemma, we can finally conclude that
$$\|u\|_{L^\infty(0,T;L^2)}\le 2M^2\left((\widetilde M)^{3/2}+(\widetilde M)^{2}\right) \exp\left\{2\int_0^{T}\|B_2\|_{L^\infty}\,dt\right\},$$
i.e., $u\in L^\infty(0,T;L^2),$  which completes the proof.
\end{proof}

\noindent
We can now complete the proof of Theorem \ref{main}.

\begin{proof}[Proof of Theorem \ref{main}] By \cite[Proposition 2.1]{dl89}, we know that
there exists a weak solution $u\in L^\infty(0,T;L^\infty)$. Let us prove uniqueness.
Suppose that $u\in L^\infty(0,T;L^\infty)$ is a solution to the equation
$$
  \begin{cases}
    \dfrac{\partial u}{\partial t}+b\cdot \nabla u=0 & (0,T)\times\R^n,\\
      u(0,\cdot)= 0 & \R^n.
      \end{cases}
$$
Notice that,
since $\div \,b\in L^1(0,T;\Exp(\frac{L}{\log L}))+ L^1(0,T;L^\infty)$, we have
$$\div \,b\in L^1(0,T;L^1\cap L^2)+ L^1(0,T;L^\infty).$$
By Proposition \ref{3-energy}, we see that $u\in L^\infty(0,T;L^2)$, and so
$u\in L^\infty(0,T;L^2\cap L^\infty)$.
Then since $\div \,b\in L^1(0,T;\Exp(\frac{L}{\log L}))+ L^1(0,T;L^\infty)$, we can apply
Proposition \ref{p-unique} and obtain that such weak solution $u\in L^\infty(0,T; L^2\cap L^\infty)$ is unique. It is obvious that $u\equiv 0$ is such a unique weak solution. The proof is completed.
\end{proof}

\section{Stability}

\noindent
In this section, we prove Theorem \ref{quant}, and provide some stability result for the transport equation for vector fields having exponentially integrable divergence.

\begin{thm}\label{stability}
Let $T>0$. Assume that  $b\in L^1(0,T;W^{1,1}_{loc})$ satisfies \eqref{hyp-b-1} and \eqref{hyp-b-2}.
Suppose that $u_0\in L^\infty$ and $\{u_0^k\}_k\in L^\infty$ have uniform upper bound in $L^\infty$,
and $u^k_0-u_0\to 0$ as $k\to\infty$ in $L^p$. Let $u,\,u^k\in L^\infty(0,T;L^\infty)$ be the solutions of the transport equation
$$\frac{\partial u}{\partial t}+b\cdot \nabla u=0 \ \mathrm{in }\ (0,T)\times\R^n,$$
subject to the initial values $u_0$, $u_0^k$, respectively. Then
$$u^k-u\to 0,\quad \mathrm{in}\, L^\infty(0,T;L^p)$$
as $k\to \infty$.
\end{thm}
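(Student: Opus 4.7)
The approach is to exploit linearity: set $w^k := u^k - u$, so that $w^k$ is a weak solution of the transport equation with initial datum $w_0^k := u_0^k - u_0$. By hypothesis the family $\{w_0^k\}_k$ is uniformly bounded in $L^\infty$ (being the difference of two families with that property) and converges to $0$ in $L^p$. The conclusion then reduces to the quantitative smallness statement $\|w^k\|_{L^\infty(0,T;L^p)} \to 0$, which is precisely what Theorem \ref{quant} is designed to deliver.

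Before invoking that theorem I would first pin down the functional class of $w^k$. Since $w_0^k \in L^p \cap L^\infty$, Proposition \ref{p-unique} produces a unique solution $\tilde w^k \in L^\infty(0,T; L^p \cap L^\infty)$ of the Cauchy problem with datum $w_0^k$; but $\tilde w^k$ also lies in $L^\infty(0,T;L^\infty)$, so by the uniqueness already established in Theorem \ref{main} it must coincide with $w^k$. Consequently $w^k \in L^\infty(0,T; L^p \cap L^\infty)$ and Theorem \ref{quant} can legitimately be applied to it. Let $M$ be a uniform $L^\infty$ bound for $\{w_0^k\}_k$, and let $\epsilon = \epsilon(M) > 0$ be the smallness threshold from Theorem \ref{quant}. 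Since $\|w_0^k\|^p_{L^p} \to 0$, we have $\|w_0^k\|^p_{L^p} < \epsilon$ for all sufficiently large $k$, and for such $k$ the theorem yields
$$\log\log\log\left(\frac{1}{\|w^k\|^p_{L^\infty(0,T;L^p)}}\right) \geq \log\log\log\left(\frac{1}{\|w_0^k\|^p_{L^p}}\right) - 16e \int_0^T \beta(s)\,ds,$$
with $\beta$ the integrable function defined there (integrability being guaranteed by the hypothesis \eqref{hyp-b-2} on $\div b$).

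Sending $k \to \infty$ makes the right-hand side diverge to $+\infty$, forcing $\|w^k\|^p_{L^\infty(0,T;L^p)} \to 0$. Explicitly,
$$\|w^k\|^p_{L^\infty(0,T;L^p)} \leq \exp\!\left(-\exp\!\left(\exp\!\left(\log\log\log\frac{1}{\|w_0^k\|^p_{L^p}} - 16e \int_0^T \beta(s)\,ds\right)\right)\right) \longrightarrow 0.$$
No step is genuinely difficult; the point calling for most care is the identification $w^k = \tilde w^k$ that makes Theorem \ref{quant} applicable to the difference (rather than to $u$ and $u^k$ separately, which is impossible because $u_0$ need not belong to $L^p$), after which the triple-exponential estimate delivers the stability, albeit at an extremely slow quantitative rate.
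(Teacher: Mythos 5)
Your reduction to the difference $w^k = u^k - u$, the identification $w^k = \tilde w^k \in L^\infty(0,T;L^p\cap L^\infty)$ via Proposition~\ref{p-unique} and uniqueness in Theorem~\ref{main}, and the observation that the triple-log estimate forces $\|w^k\|_{L^\infty(0,T;L^p)} \to 0$ are all correct. However, there is a genuine circularity in citing Theorem~\ref{quant} as a black box. In the paper, Theorem~\ref{quant} is not an independent result: its proof opens with ``Then by \eqref{stability-p}, we know that\ldots,'' where \eqref{stability-p} is precisely the estimate
$$\|v^k\|^p_{L^\infty(0,T;L^p)}\le \exp\left\{-\exp\left\{\exp\left\{\log\log\log\tfrac 1{\epsilon}-16e\int_0^T\beta(s)\,ds\right\}\right\}\right\}$$
established at the \emph{end} of the proof of the present stability theorem. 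So you are invoking a consequence of the statement you are trying to prove. The actual content of the proof is the Gronwall-type iteration that produces this estimate: the local energy inequality $\alpha_k(t) \le \alpha_k(t_0) + 16e\int_{t_0}^t \beta\,\alpha_k\log\frac1{\alpha_k}\log\log\frac1{\alpha_k}$ on $\|v^k(t)\|^p_{L^p}$ (obtained via cut-offs, the H\"older-type duality and the second estimate of Lemma~\ref{duality}), the comparison with the explicit supersolution $\alpha^\ast$, and --- crucially, since the pointwise smallness $\alpha_k(t)<\exp\{-e^{e+M}\}$ needed to invoke Lemma~\ref{duality} is only guaranteed on a short time interval --- the partition $0 = T_0 < T_1 < \cdots < T_{i+1} = T$ chosen via \eqref{3.4} and the iteration across successive subintervals. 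None of this appears in your proposal; it is all delegated to a theorem whose proof presupposes it.

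To repair the argument, you would need to carry out the Gronwall iteration for $w^k$ directly (essentially Steps 1--4 of the paper's proof), or, alternatively, restate the uniform a~priori estimate as a standalone lemma on which both Theorem~\ref{quant} and the stability theorem depend, and then deduce stability from that lemma. As written, the logical dependency runs the wrong way.
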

\begin{proof} {\bf Step 1.} Let $v_0^k=u^k_0-u_0$, and $v^k=u^k-u$ for each $k$. Denote by
$$M=\sup_k \|v^k_0\|_{L^\infty}=\sup_k \|u^k_0-u_0\|_{L^\infty}.$$
Notice that $v^k$ is the unique solution in $L^\infty(0,T; L^\infty)$ of
$$
\begin{cases}
    \dfrac{\partial v^k}{\partial t}+b\cdot \nabla v^k=0 &  (0,T)\times\R^n;\\
      v^k(0,\cdot)= v^k_0 &   \R^n.
      \end{cases}
$$
On the other hand, by Proposition \ref{p-unique}, there exists a unique solution $\tilde v^k\in L^\infty(0,T;L^p\cap L^\infty)$, since $v^k_0\in L^p\cap L^\infty$. By the uniqueness, we see that  $v^k=\tilde v^k\in L^\infty(0,T;L^\infty)$, and hence, $v^k\in L^\infty(0,T;L^p\cap L^\infty)$. Write $\div \,b=B_{1}+B_{2},$ where $B_1\in L^1(0,T;\Exp(\frac{L}{\log L}))$ and $B_2\in L^1(0,T;L^\infty)$. Then, from the estimates \eqref{infty-bdd-solution} and \eqref{p-bdd-solution} of Proposition \ref{p-exist}, we see that
\begin{equation}\label{bdd-stability-2}
\|v^k\|_{L^\infty(0,T;L^\infty)}\leq \|v_0^k\|_{L^\infty}\le M,
\end{equation}
and
\begin{equation}\label{p-bdd-stability}
\|v^k\|_{L^\infty(0,T;L^p)}^p
\leq \left\{\|v^k_{0}\|_{L^p}^p+M^p\int_0^T\|B_1\|_{L^1}\,dt\right\}\exp\left\{\int_0^T\|B_2\|_{L^\infty}\,dt\right\}.
\end{equation}
For each $R>0$, let $\psi_R\in \cC^\infty_c(\R^n)$ be as in \eqref{cutoff}. Arguing as in \eqref{energy-control-3} we see that
$$
\int_{\R^n}(|v^k(t_1)|^p-|v^k(t_0)|^p)\psi_R \,dx\\
=\int^{t_1}_{t_0}\int_{\R^n}\div  b\cdot|v^k|^p\psi_R\,dx\,ds+\int^{t_1}_{t_0}\int_{\R^n}
b\cdot \nabla \psi_R|v^k|^p\,dx\,ds.
$$
for any $0\le t_0<t_1\le T$. For the second term above, we use $|v^k|^p\in L^\infty(0,T;L^1\cap L^\infty)$ and $\frac{b(t,x)}{1+|x|}\in L^1(0,T;L^1)+L^1(0,T;L^\infty)$ to see that
$$\lim_{R\to\infty}\left|\int^{t_1}_{t_0}\int_{\R^n} b\cdot \nabla \psi_R|v^k|^p\,dx\,ds\right|\le \lim_{R\to\infty}
\left|\int^{t_1}_{t_0}\int_{B(0,2R)\setminus B(0,R)} \frac{|b(s,x)|}{1+|x|}|v^k|^p\,dx\,ds\right|=0.$$
Thus, with the help of \eqref{energy-control-3} we get that
\begin{equation}\label{3.3}
\aligned
\|v^k(t_1)\|^p_{L^p}
&\leq \|v^k(t_0)\|^p_{L^p}+ \int^{t_1}_{t_0}\int_{\R^n}|\div  b| |v^k|^p\,dx\,ds\\
&\leq \|v^k(t_0)\|^p_{L^p} +\int^{t_1}_{t_0}\int_{\R^n}|B_1| |v^k|^p\,dx\,ds+
\int^{t_1}_{t_0}\left\|B_2\right\|_{L^\infty}\|v^k\|^p_{L^p}\,ds.
\endaligned\end{equation}
Notice that  $B_1\in L^1(0,T;\Exp(\frac{L}{\log L}))\subset L^1(0,T;L^1)$, and $B_2\in L^1(0,T;L^\infty)$. This, together with \eqref{p-bdd-stability}, \eqref{bdd-stability-2} and the fact that $\|v_0^k\|_{L^p}\to 0$, further implies that there exist $i, K_1\in \N$ and $0=T_0<T_1<\cdots<T_{i}<T_{i+1}=T$ such that
\begin{eqnarray}\label{3.4}
&& \int_{T_j}^{T_{j+1}}\int_{\R^n}|B_1| |v^k|^p\,dx\,ds+
\int_{T_j}^{T_{j+1}}\left\|B_2\right\|_{L^\infty}\,\|v^k\|_{L^p}^p\,ds\le \frac12\exp(-e^{e+M}).
\end{eqnarray}
for each $0\le j\le i$ and $k\ge K_1$. For convenience, in what follows we denote by $\alpha_k(t)$, $\beta_1(t)$, $\beta_2(t)$ the quantities $\|v^k(t,\cdot)\|_{L^p}^p$, $\|B_1(t,\cdot)\|_{\Exp(\frac{L}{\log L})}$ and $\|B_2(t,\cdot)\|_{L^\infty}$, respectively. Denote also $\beta_1(t)+\beta_2(t)$ by $\beta(t)$.\\
\\
{\bf Step 2.} Let us introduce a continuous function as, for each $s\in (0,T]$,
$$\alpha^\ast(s)=\exp\left\{-\exp\left\{\exp\left\{\log\log\log\frac 1{\epsilon}-16e\int_0^s\beta(s)\,ds\right\}\right\}\right\},$$
where $\epsilon>0$ is small enough so that
$$\alpha^\ast(T)=\exp\left\{-\exp\left\{\exp\left\{\log\log\log\frac 1{\epsilon}-16e\int_0^T\beta(s)\,ds\right\}\right\}\right\}<\frac12\exp(-e^{e+M}).$$
From the definition, we see that $\alpha^\ast$ is Lipschitz smooth and increasing on $[0,T]$.\\
\\
{\bf Step 3.} Using again that $\|v_0^k\|_{L^p}\to 0$, we find that there exists $K_\ez\ge K_1$ such that
$$\alpha_k(0)=\|v_0^k\|_{L^p}^p<\ez,\hspace{1cm}\text{whenever }k\geq K_\ez.$$
Using this fact, and equations \eqref{3.3} and \eqref{3.4}, we conclude that
$$\aligned
\alpha_k(t)
&< \ez+\int^{T_1}_{0}\int_{\R^n}|B_1| |v^k|^p\,dx\,ds+\int^{T_1}_{0}\left\|B_2\right\|_{L^\infty}\|v^k\|_{L^p}^p\,ds\\
&< \exp(-e^{e+M}),
\endaligned$$
for each $t\in (0,T_1]$. Therefore, if $t\in (0,T_1]$ and $k\ge K_\ez$ we have
$$\log(e+M)\le |\log \alpha_k(t)|=\log\frac{1}{\alpha_k(t)}$$
and
$$\log\log(e^e+M)\le |\log(|\log \alpha_k(t)|)|=\log\log\frac{1}{\alpha_k(t)}.$$
By using the first part of Lemma \ref{duality}, we find for $t\in (0,T_1]$ and $k\ge K_\ez$ that
\begin{eqnarray*}
&&\int_{\R^n}|B_1| |v^k|^p\,dx\le 2\|B_1\|_{\Exp(\frac{L}{\log L})}\||v^k|^p\|_{L\log L\log\log L}\\
&&\quad\quad\le 4e\|B_1\|_{\Exp(\frac{L}{\log L})}\,\alpha_k(t)\left[\log(e+M)+|\log \alpha_k(t)|\right]
\left[\log\log(e^e+M)+|\log|\log(\alpha_k(t)||\right]\\
&&\quad\quad \le 16e\|B_1\|_{\Exp(\frac{L}{\log L})}\,\alpha_k(t)\log \left(\frac 1{\alpha_k(t)}\right)
\log\log\left(\frac{1}{ \alpha_k(t)}\right),
\end{eqnarray*}
 which together with \eqref{3.3} yields
$$\aligned
\alpha_k(t)
&\leq 16e\int_{0}^t \beta_1(s) \alpha_k(s)\log \frac1{\alpha_k(s)}\log\log\frac1{\alpha_k(s)}+\beta_2(s)\alpha_k(s)\,ds+\|v^k_0\|_{L^p}^p\\
&\le 16e\int_{0}^t \beta(s) \alpha_k(s)\log \frac1{\alpha_k(s)}\log\log\frac1{\alpha_k(s)}\,ds+\ez.
\endaligned$$
Notice that by the definition of $\alpha^\ast(t)$, we find that
$$\alpha^\ast(t)=\epsilon+16e\int_{0}^t \beta(s)\alpha^\ast(s)\log \frac 1{\alpha^\ast(s)}\log\log \frac 1{\alpha^\ast(s)}\,ds.$$
Then for each $t\in [0,T_1]$ and $k\ge K_\ez$, by the fact $\alpha_k(t)<e^{-e}$, and the function on $t|\log t||\log(|\log t|)|$ is increasing on $[0,e^{-e}]$, we see that
$$0\le \alpha_k(t)\le \alpha^\ast(t).$$
This together with the fact that $\alpha^\ast(t)$ is increasing on $[0,T_1]$ implies
$$0\le \alpha_k(t)\le \alpha^\ast(T_1)<\frac 12\exp\left\{-\exp\{e+M\}\right\}.$$
{\bf Step 4.} We can now iterate the approach to get the desired estimates. By the choice of $T_i$ (see \eqref{3.4}) and Step 3, we see that for each $t\in (T_1,T_2]$ and $k\ge K_\ez$,
$$\aligned
\alpha_k(t)&\le \alpha_k(T_1)+\int^{T_2}_{T_1}\int_{\R^n}|B_1| |v^k|^p\,dx\,ds+\int^{T_2}_{T_1}\left\|B_2\right\|_{L^\infty}\|v^k\|_{L^p}^p\,ds\\
&< \exp\left\{-\exp\{e+M\}\right\}.\endaligned
$$
Hence, for all $t\in (0,T_2]$ and $k\ge K_\ez$, we have
\begin{eqnarray*}
\alpha_k(t) &&\le 16e\int^{T_2}_{0} \beta_1(s) \alpha_k(s)\log \frac1{\alpha_k(s)}\log\log\frac1{\alpha_k(s))}+\beta_2(s)\alpha_k(s)\,ds+\int_{\R^n}|v^k|^p(0,x)\,dx\\
&&\le 16e\int^{T_2}_0\beta(s) \alpha_k(s)\log \frac1{\alpha_k(s)}\log\log\frac1{\alpha_k(s)}\,ds+\alpha_k(0),
\end{eqnarray*}
and, by the definition of $\alpha^\ast$,
$$\alpha^\ast(t)=\ez+16e\int_{0}^{t} \beta(s)\alpha^\ast(s)\log \frac 1{\alpha^\ast(s)}\log\log \frac 1{\alpha^\ast(s)}\,ds.$$
Therefore, the proof of Step 3 works well for $(0,T_2]$, and hence, we see that
$$0\le\alpha_k(t)\le \alpha^\ast(t)$$
for all $t\in (0,T_2]$ and $k\ge K_\ez$. Repeating this argument $i-1$ times more, we can conclude that for all $t\in (0,T]$
and $k\ge K_\ez$, it holds
\begin{equation}\label{stability-p}
\|v^k\|^p_{L^\infty(0,T;L^p)}\le \alpha^\ast(T)=\exp\left\{-\exp\left\{\exp\left\{\log\log\log\frac 1{\epsilon}-16e\int_0^T\beta(s)\,ds\right\}\right\}\right\},
\end{equation}
which gives the desired estimate, and completes the proof.
\end{proof}

\noindent
We next prove Theorem \ref{quant}.

\begin{proof}[Proof of Theorem \ref{quant}]
Let $\ez>0$ be chosen small enough such that
$$\exp\left\{-\exp\left\{\exp\left\{\log\log\log\frac 1{\epsilon}-32e\int_0^T\beta(s)\,ds\right\}\right\}\right\}<\frac{1}{2}\exp\left\{-\exp\{e+M\}\right\}.$$
Then by \eqref{stability-p}, we know that if $\|u_0\|_{L^p}<\ez$ and $\|u_0\|_{L^\infty}<M$, then the solution
$u$ satisfies
\begin{equation*}
\|u\|^p_{L^\infty(0,T;L^p)}\le \exp\left\{-\exp\left\{\exp\left\{\log\log\log\frac 1{\epsilon}-16e\int_0^T\beta(s)\,ds\right\}\right\}\right\},
\end{equation*}
and hence,
\begin{equation}\label{quant-1}
\|u\|^p_{L^\infty(0,T;L^p)}\le \exp\left\{-\exp\left\{\exp\left\{\log\log\log\frac 1{\|u_0\|_{L^p}}-16e\int_0^T\beta(s)\,ds\right\}\right\}\right\},
\end{equation}
Notice that
\begin{eqnarray*}
&&\exp\left\{-\exp\left\{\exp\left\{\log\log\log\frac 1{\|u(T)\|^p_{L^p}}-16e\int_0^{T}\beta(s)\,ds\right\}\right\}\right\}\\
&&\quad\le \exp\left\{-\exp\left\{\exp\left\{\log\log\log\frac 1{\ez}-32e\int_0^{T}\beta(s)\,ds\right\}\right\}\right\}< \frac{1}{2}\exp\left\{-\exp\{e+M\}\right\}.
\end{eqnarray*}
Therefore, by considering the backward equation and using the estimate \eqref{stability-p}  again, we obtain
\begin{equation*}
 \|u\|^p_{L^\infty(0,T;L^p)}\le
 \exp\left\{-\exp\left\{\exp\left\{\log\log\log\frac 1{\|u(T)\|^p_{L^p}}-16e\int_0^T\beta(s)\,ds\right\}\right\}\right\},
\end{equation*}
which implies that
  \begin{equation*}
 \|u_0\|^p_{L^p}\le
 \exp\left\{-\exp\left\{\exp\left\{\log\log\log\frac 1{\|u\|^p_{L^\infty(0,T;L^p)}}-16e\int_0^T\beta(s)\,ds\right\}\right\}\right\}.
\end{equation*}
Combining this and \eqref{quant-1} we get the desired estimate and complete the proof.
\end{proof}

\noindent
Similarly, by considering vector fields with exponentially integrable divergence, we arrive at the following quantitative estimate. Since the proof is rather identical to the above theorem, we will skip the proof.

\begin{coro}\label{quant-ex}
Let $T,M>0$ and $1\le p<\infty$. Suppose that $b\in L^1(0,T;W^{1,1}_{loc})$ satisfies \eqref{hyp-b-1} and
$$\div b\in L^1(0,T;L^\infty)+L^1(0,T;\Exp L).$$
Then there exists $\epsilon>0$ with the following property: if $u_0\in L^p\cap L^\infty$ satisfies $\|u_0\|_{L^\infty}\le M$ and $\|u_0\|^p_{L^p}< \ez$, then the problem \eqref{tran-eq} has exactly one weak solution $u$ satisfying
$$\left|\log\log\frac1{\|u\|_{L^\infty(0,T;L^p)}^p}-\log\log\frac1{\|u_0\|^p_{L^p}}\right|\leq 16e\int_0^T\beta(s)\,ds,$$
where $\div b=B_1+B_2$ and $\beta(t)=\|B_1(t,\cdot)\|_{\Exp L}+\|B_2(t,\cdot)\|_{L^\infty}$.
\end{coro}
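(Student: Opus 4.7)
The plan is to follow the proof of Theorem \ref{quant} step by step, with the only substantive change being to replace the Orlicz duality between $L\log L\log\log L$ and $\Exp(L/\log L)$ by the classical duality between $L\log L$ and $\Exp L$, and consequently to replace the triple-exponential Gronwall iterant by a double-exponential one. This is precisely what turns the $\log\log\log$ bound of Theorem \ref{quant} into the $\log\log$ bound stated here.

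The first ingredient is a one-step-lower analog of Lemma \ref{duality}: if $f\in L^\infty\cap L\log L$ then
$$\|f\|_{L\log L}\le 2e\,\|f\|_{L^1}\,\bigl(\log(e+\|f\|_{L^\infty})+|\log\|f\|_{L^1}|\bigr),$$
together with the standard H\"older bound $\int|fg|\,dx\le 2\|f\|_{L\log L}\|g\|_{\Exp L}$; both are proved by the same $E=\{|f|\le e\lambda\}$ splitting used in Lemma \ref{duality}, one logarithm shorter. Armed with these, the proofs of Proposition \ref{p-unique} and Theorem \ref{stability} go through verbatim, with only the Orlicz norms and the resulting Gronwall inequality modified. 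Writing $\alpha_k(t)=\|v^k(t,\cdot)\|_{L^p}^p$, decomposing $\div b=B_1+B_2$ with $B_1\in L^1(0,T;\Exp L)$ and $B_2\in L^1(0,T;L^\infty)$, and setting $\beta(t)=\|B_1(t,\cdot)\|_{\Exp L}+\|B_2(t,\cdot)\|_{L^\infty}$, the a priori estimate becomes
$$\alpha_k(t)\le \|v_0^k\|_{L^p}^p+16e\int_0^t \beta(s)\,\alpha_k(s)\,\log\frac{1}{\alpha_k(s)}\,ds,$$
valid whenever $\alpha_k$ stays below $\exp(-e^{e+M})$. The natural comparison function
$$\alpha^\ast(t)=\exp\left\{-\exp\left\{\log\log\frac{1}{\epsilon}-16e\int_0^t\beta(s)\,ds\right\}\right\}$$
satisfies the integral equation $\alpha^\ast(t)=\epsilon+16e\int_0^t\beta\,\alpha^\ast\,\log(1/\alpha^\ast)\,ds$ exactly because the change of variable $y=\log\log(1/\alpha)$ linearises the ODE $\alpha'=16e\beta\,\alpha\log(1/\alpha)$ to $y'=-16e\beta$. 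The partitioning argument in the proof of Theorem \ref{stability} then yields $\alpha_k(t)\le\alpha^\ast(t)$ on $[0,T]$ for all sufficiently large $k$.

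To finish, I would first apply the stability estimate in the forward direction with vanishing initial perturbation $u_0^k\equiv 0$, obtaining
$$\|u\|_{L^\infty(0,T;L^p)}^p\le \exp\left\{-\exp\left\{\log\log\frac{1}{\|u_0\|_{L^p}^p}-16e\int_0^T\beta(s)\,ds\right\}\right\},$$
and then apply the same estimate to the backward Cauchy problem on $(0,T)$ with terminal datum $u(T,\cdot)$, which yields the symmetric bound with $\|u\|_{L^\infty(0,T;L^p)}^p$ and $\|u_0\|_{L^p}^p$ swapped. Rewriting each inequality in terms of $\log\log$ and combining gives the two-sided estimate. The only real obstacle, handled exactly as in Theorem \ref{quant}, is to choose $\epsilon$ small enough (depending on $M$ and $\int_0^T\beta$) so that both the forward and backward iterations stay in the regime $\alpha_k(t)<\exp(-e^{e+M})$ on which $s\mapsto s|\log s|$ is monotone; this is a mild book-keeping step, and explains the need for the two successive smallness thresholds built into the definition of $\epsilon$.
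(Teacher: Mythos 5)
Your proposal is correct and follows exactly the route the paper intends: the paper explicitly states that the proof of Corollary \ref{quant-ex} is ``rather identical'' to that of Theorem \ref{quant} and omits it, and what you describe --- trading the $L\log L\log\log L$ / $\Exp(L/\log L)$ duality for the $L\log L$ / $\Exp L$ duality, dropping one logarithm in the $\lambda$-splitting proof of the analogue of Lemma \ref{duality}, replacing the triple-exponential Gronwall iterant by the double-exponential $\alpha^\ast(t)=\exp\{-\exp\{\log\log\frac1\epsilon-16e\int_0^t\beta\}\}$, and closing with the same forward/backward application of the stability estimate --- is precisely that modification. Your side remark about the substitution $y=\log\log(1/\alpha)$ linearising the Gronwall ODE correctly explains why this comparison function solves the integral identity, and your one-log duality constant $2e$ (indeed $2$ suffices) makes the resulting Gronwall constant $\le 16e$, consistent with the stated bound.
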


\section{Extension to BV vector fields}

\noindent
In this section, we shall prove Theorem \ref{main-bv}. Let us begin by recalling the renormalization result of Ambrosio \cite{a04}. An $L^1$ function is said to belong to $BV$ if its first order distributional derivatives are finite Radon measures. By a $BV_{loc}$ function we mean any $L^1_{loc}$ function whose first order distributional derivatives are locally finite Radon measures. See \cite{afp00} fore more on $BV$ functions.

\begin{thm}[Ambrosio]\label{renormalized-bv}
Let $u\in L^\infty(0,T;L^\infty)$ be a solution of the Cauchy problem
$$
\begin{cases}
 \dfrac{\partial u}{\partial t}+b\cdot \nabla u+c\,u=0 & (0,T)\times \R^n,\\
u(0,\cdot)=u_0 & \R^n.
\end{cases}$$
Here $b\in L^1(0,T;BV_{loc})$ with $\div b\in L^1(0,T;L^1_{loc})$ and $c\in L^1(0,T;L^1_{loc})$. Then, for each $\beta\in \cC^1(\R)$ the composition $\beta(u)$ is a weak solution to the transport problem
$$
\begin{cases}
\frac{\partial \,\beta(u)}{\partial t}-b\cdot\nabla \beta(u)+c\,u\,\beta'(u)=0&(0,T)\times \R^n,\\
\beta(u)(0,\cdot)=\beta(u_0)&\R^n.
\end{cases}$$
\end{thm}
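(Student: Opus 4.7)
The plan is to follow Ambrosio's original argument via regularization by spatial convolution and a careful commutator estimate. Let $0\le \rho\in \cC^\infty_c(\R^n)$ with $\int\rho=1$, set $\rho_\epsilon(x)=\epsilon^{-n}\rho(x/\epsilon)$, and let $u_\epsilon=u\ast\rho_\epsilon$. Convolving the Cauchy problem with $\rho_\epsilon$ in the spatial variable yields
$$\frac{\partial u_\epsilon}{\partial t}+b\cdot\nabla u_\epsilon+c\,u_\epsilon=r_\epsilon,$$
where the commutator is $r_\epsilon = \bigl(b\cdot\nabla u_\epsilon-(b\cdot\nabla u)\ast\rho_\epsilon\bigr)+\bigl(cu_\epsilon-(cu)\ast\rho_\epsilon\bigr)$. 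Since $u_\epsilon$ is smooth in $x$, the classical chain rule applies and gives
$$\frac{\partial \beta(u_\epsilon)}{\partial t}+b\cdot\nabla\beta(u_\epsilon)+c\,u_\epsilon\,\beta'(u_\epsilon)=\beta'(u_\epsilon)\,r_\epsilon.$$
Since $u\in L^\infty$ and $\beta\in \cC^1$, the factor $\beta'(u_\epsilon)$ is uniformly bounded by $\|\beta'\|_{L^\infty([-\|u\|_\infty,\|u\|_\infty])}$. The entire proof therefore reduces to establishing $r_\epsilon\to 0$ in $L^1(0,T; L^1_{loc})$, after which one passes to the limit $\epsilon\to 0$ in the above identity in the distributional sense, the initial condition $\beta(u_0)$ being preserved by continuity of convolution at $t=0$.

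The contribution from $c$ is innocuous, since $cu\in L^1(0,T; L^1_{loc})$ and convolution is continuous on $L^1_{loc}$. The real issue is the part coming from $b$, which after an integration by parts reduces to integrals of $u(t,y)\,[b(t,x)-b(t,y)]\cdot\nabla\rho_\epsilon(x-y)$ plus terms controlled by $(\div b)\ast\rho_\epsilon$. In the $W^{1,1}_{loc}$ setting of DiPerna--Lions, the difference quotient $(b(x)-b(y))/|x-y|$ is dominated by the maximal function of $\nabla b$, and Lebesgue differentiation together with the continuity of convolution closes the argument. For $BV_{loc}$ vector fields the distributional derivative decomposes as $Db=D^ab+D^jb+D^cb$, and the singular part $D^sb=D^jb+D^cb$ is mutually singular with Lebesgue measure, so the naive isotropic estimate produces a nonvanishing residual on the singular support of $Db$.

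The main obstacle, and the technical core of the proof, is to handle this singular contribution. Ambrosio's key insight is to exploit Alberti's rank-one theorem: at $|D^s b|$-a.e.\ point the polar matrix $M$ in the decomposition $D^sb=M\,|D^sb|$ has the rank-one form $\eta\otimes\xi$ for unit vectors $\eta,\xi$. The plan is then to replace the isotropic mollifier by an \emph{anisotropic} family $\rho_\epsilon$ whose profile is tailored, via a Lusin-type approximation, to the measurable direction field $\xi(\cdot)$; concretely, one takes $\rho_\epsilon$ thin along $\xi$ and elongated in the orthogonal hyperplane. A blow-up analysis at generic singular points of $Db$ then shows that the dangerous part of the commutator only probes differences of $b$ in directions orthogonal to $\xi$, against which the rank-one structure $\eta\otimes\xi$ contributes nothing in the limit. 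Carrying out this blow-up and controlling all the errors arising from the Lusin approximation of $\xi$ is the hardest step. Once $r_\epsilon\to 0$ in $L^1_{loc}$ is established, dominated convergence applied to $\beta'(u_\epsilon)r_\epsilon$ permits the passage to the limit in the regularized chain-rule identity, and one reads off the renormalization property in the sense of distributions, concluding the proof.
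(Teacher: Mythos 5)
Your sketch follows Ambrosio's original argument---regularization, commutator estimate, Alberti's rank-one theorem, anisotropic mollifiers and Lusin approximation of the direction field---which is exactly what the paper invokes: the paper offers no independent proof of this theorem, deferring to \cite[Theorem~3.5]{a04} and to \cite{cr09}.

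One point deserves correction. The reduction ``establish $r_\epsilon\to 0$ in $L^1(0,T;L^1_{loc})$'' overstates what is true in the $BV$ setting, even after passing to anisotropic kernels. Ambrosio's commutator estimate (\cite[Theorem~3.2]{a04}) gives only
$\limsup_{\epsilon\to 0}\int_0^T\!\!\int_K|r_\epsilon|\,dx\,dt<\infty$,
with a bound proportional to $|D^s b|(K)$ that does not vanish as $\epsilon\to 0$ for any fixed convolution kernel. The actual mechanism is different and is worth spelling out: one first shows that the renormalization defect
$\sigma:=\partial_t\beta(u)+\div(b\,\beta(u))-\beta(u)\,\div b+c\,u\,\beta'(u)$
is a measure, absolutely continuous with respect to $|D^sb|$, \emph{intrinsic} to the problem and independent of the kernel, whose density is controlled at $|D^sb|$-a.e.\ point by a kernel-dependent functional $\Lambda(M_b,\rho)$, where $M_b$ is the polar of $D^sb$. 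Alberti's theorem then gives $M_b=\eta\otimes\xi$, and one shows $\inf_\rho\Lambda(\eta\otimes\xi,\rho)=0$, which forces the density, hence $\sigma$, to vanish. It is this two-stage structure---intrinsic defect, kernel-dependent bound, optimization over kernels---that replaces the direct $L^1_{loc}$ convergence of $r_\epsilon$ available in the $W^{1,1}$ case, and it is precisely what the paper itself uses at the analogous point in Proposition~\ref{commutator-bv-2}, which establishes only a $\limsup$ bound before citing Steps~2--3 of \cite[Theorem~3.5]{a04}. Finally, note that the displayed renormalized equation in the statement carries a sign typo: it should read $\partial_t\beta(u)+b\cdot\nabla\beta(u)+c\,u\,\beta'(u)=0$, in agreement with what your regularized chain-rule computation actually produces.
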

\begin{proof}
See the proof of \cite[Theorem 3.5]{a04}; see also \cite{cr09}.
\end{proof}
\noindent
As explained in the introduction, the following result is a kind of multiplicative property for solutions of the transport equation.

\begin{prop}\label{commutator-bv-2}
Let $T>0$, $b\in L^1(0,T;BV_{loc})$ with $\div b\in L^1(0,T;L^1_{loc})$, and $c_1,c_2\in  L^1(0,T;L^1_{loc})$.
Suppose that $u,v\in L^\infty(0,T;L^\infty)$ are solutions of the transport equation
$$\frac{\partial u}{\partial t}+b\cdot \nabla u+c_i\,u=0 \ \mathrm{in }\ (0,T)\times\R^n,$$
$i=1,2$, respectively. Then, the pointwise multiplication $u\,v$ is a solution of
$$\frac{\partial u}{\partial t}+b\cdot \nabla u+c_1\,u+c_2\,u=0 \ \mathrm{in }\ (0,T)\times\R^n.$$
\end{prop}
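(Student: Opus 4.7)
The plan is to deduce the multiplicative property by a standard mollification argument: regularize $u$ and $v$ in the space variable, apply the classical product rule at the level of the smooth approximations, and then pass to the limit using the BV analogue of Lemma \ref{regularize}, which is essentially the commutator estimate underlying Ambrosio's Theorem \ref{renormalized-bv}.

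More concretely, I would pick $0 \le \rho \in \cC^\infty_c(\R^n)$ with $\int \rho = 1$, set $\rho_\epsilon(x) = \epsilon^{-n}\rho(x/\epsilon)$, and define $u_\epsilon = u \ast \rho_\epsilon$, $v_\epsilon = v \ast \rho_\epsilon$. By the BV version of the DiPerna-Lions commutator estimate (implicit in Ambrosio's argument leading to Theorem \ref{renormalized-bv}, and applicable thanks to $b \in L^1(0,T;BV_{loc})$ with $\div b \in L^1(0,T;L^1_{loc})$ and $c_i u,\, c_i v \in L^1(0,T; L^1_{loc})$), the mollified functions satisfy
\begin{equation*}
\frac{\partial u_\epsilon}{\partial t} + b \cdot \nabla u_\epsilon + c_1\, u_\epsilon = R_{u,\epsilon}, \qquad \frac{\partial v_\epsilon}{\partial t} + b \cdot \nabla v_\epsilon + c_2\, v_\epsilon = R_{v,\epsilon},
\end{equation*}
where $R_{u,\epsilon}, R_{v,\epsilon} \to 0$ in $L^1(0,T; L^1_{loc})$ as $\epsilon \to 0$.

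Next, because $u_\epsilon$ and $v_\epsilon$ are smooth in $x$, the classical product rule yields pointwise
\begin{equation*}
\frac{\partial (u_\epsilon v_\epsilon)}{\partial t} + b \cdot \nabla (u_\epsilon v_\epsilon) + (c_1 + c_2)\, u_\epsilon v_\epsilon = v_\epsilon\, R_{u,\epsilon} + u_\epsilon\, R_{v,\epsilon}.
\end{equation*}
Since $\|u_\epsilon\|_{L^\infty} \le \|u\|_{L^\infty}$ and $\|v_\epsilon\|_{L^\infty} \le \|v\|_{L^\infty}$ uniformly in $\epsilon$, the right-hand side tends to $0$ in $L^1(0,T;L^1_{loc})$. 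Meanwhile, $u_\epsilon v_\epsilon \to uv$ a.e.\ and is uniformly bounded, so dominated convergence gives $u_\epsilon v_\epsilon \to uv$ in $L^1_{loc}$, and likewise $(c_1+c_2)\,u_\epsilon v_\epsilon \to (c_1+c_2)\, uv$ in $L^1_{loc}$. Testing against an arbitrary $\varphi \in \cC^\infty_c([0,T)\times\R^n)$, all terms in the weak formulation pass to the limit, yielding the desired distributional identity
\begin{equation*}
\frac{\partial (uv)}{\partial t} + b \cdot \nabla (uv) + (c_1 + c_2)(uv) = 0 \ \text{in } (0,T)\times\R^n.
\end{equation*}

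The main obstacle is Step 1: producing an $L^1_{loc}$ vanishing commutator $R_{u,\epsilon}$ for a $BV_{loc}$ (rather than $W^{1,1}_{loc}$) vector field. This is precisely the content of Ambrosio's breakthrough \cite{a04}; the contribution from the reaction term is the easier commutator $(c_i u) \ast \rho_\epsilon - c_i\,(u \ast \rho_\epsilon)$, which tends to $0$ in $L^1_{loc}$ since $c_i u \in L^1(0,T;L^1_{loc})$ and $u \in L^\infty$. Once this commutator estimate is in hand, Steps 2 and 3 are routine. (Equivalently, one could view $(u,v)$ as a bounded solution of the diagonal vector-valued system and invoke the vector-valued form of renormalization with $\beta(s_1,s_2) = s_1 s_2$, which directly produces $\partial_{s_1}\beta \cdot c_1 u + \partial_{s_2}\beta \cdot c_2 v = (c_1+c_2)\,uv$.)
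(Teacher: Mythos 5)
Your mollification setup matches the paper's, but there is a genuine gap at the crucial step. You assert that for $b\in L^1(0,T;BV_{loc})$ the commutators $R_{u,\epsilon}$, $R_{v,\epsilon}$ converge to $0$ in $L^1(0,T;L^1_{loc})$, citing ``the BV analogue of Lemma \ref{regularize}.'' That is exactly what is \emph{not} true for $BV$ vector fields, and it is the whole reason Ambrosio's result was a breakthrough. The DiPerna--Lions argument (Lemma \ref{regularize}) gives $r_\epsilon\to 0$ in $L^1_{loc}$ only when $b\in W^{1,1}_{loc}$, because the commutator can then be controlled by the absolutely continuous derivative of $b$. When $b$ is merely $BV_{loc}$, the singular part $D^s b$ contributes a term whose $L^1_{loc}$ norm has a $\limsup$ comparable to $|D^s b|(K)\,\|u\|_{L^\infty}$; Ambrosio's Theorem 3.2 shows only that this $\limsup$ is \emph{finite}, not zero. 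Killing the contribution of $D^s b$ requires the genuinely harder Steps 2--3 of the proof of \cite[Theorem 3.5]{a04}: passing to anisotropic mollifiers adapted to the polar decomposition of $D^s b$ and using Alberti's rank-one theorem. Your proof skips this entirely by pretending the commutator vanishes, so the passage to the limit as written is unjustified.

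The paper's proof is careful precisely here: after forming the equation for $u_\epsilon v_\epsilon$ with remainder $v_\epsilon r_\epsilon + u_\epsilon s_\epsilon$, it only concludes
$\limsup_{\epsilon\to 0}\int_0^T\int_K |v_\epsilon r_\epsilon + u_\epsilon s_\epsilon|\,dx\,dt<\infty$,
deduces that $\frac{\partial(uv)}{\partial t}+b\cdot\nabla(uv)+(c_1+c_2)uv$ is a signed measure of locally finite total variation, and \emph{then} invokes Steps 2 and 3 of Ambrosio's Theorem 3.5 to show this measure is actually zero. Your parenthetical alternative at the end --- viewing $(u,v)$ as a bounded solution of a diagonal system and applying the vector-valued renormalization theorem with $\beta(s_1,s_2)=s_1 s_2$ (truncated, since $\beta$ has unbounded gradient) --- is the correct way to outsource the difficulty to Ambrosio's theorem and would repair the argument; your main-line proof as written does not.
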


\begin{proof}
Let $0\le \rho\in  \cC^\infty_c(\R^n)$ be an even function, such that $\int_{\R^n}\rho\,dx=1$. For each $\ez>0$,
set $\rho_\ez(x)=\ez^{-n}\rho(x/\ez)$. We can use $\rho$ to mollify both equations, and obtain that
$$
\aligned
\dfrac{\partial \,u\ast \rho_\ez}{\partial t}+b\cdot \nabla u\ast \rho_\ez+c_1\,u\ast \rho_\ez&=r_\ez,\text{ and}\\
\dfrac{\partial\, v\ast \rho_\ez}{\partial t}+b\cdot \nabla v\ast \rho_\ez+c_2\,v\ast \rho_\ez&=s_\ez,\endaligned$$
where $$r_\ez=b\cdot \nabla u\ast \rho_\ez-(b\cdot \nabla u)\ast \rho_\ez+c_1\,u\ast \rho_\ez-(c_1\,u)\ast \rho_\ez$$
and $$s_\ez=b\cdot \nabla v\ast \rho_\ez-(b\cdot \nabla v)\ast \rho_\ez+c_2\,v\ast \rho_\ez-(c_2\,v)_\ez.$$
Therefore, we see that
$$\frac{\partial (u\ast \rho_\ez v\ast \rho_\ez)}{\partial t}+b\cdot \nabla (u\ast \rho_\ez\,\, v\ast \rho_\ez)
+(c_1+c_2)\,u\ast \rho_\ez\, v\ast \rho_\ez=v\ast \rho_\ez \,r_\ez+u\ast \rho_\ez \,s_\ez.$$
Since $u,v\in L^\infty(0,T;L^\infty)$, by using the commutator estimate from \cite[Theorem 3.2]{a04}, we see that for each compact set $K\subset \R^n$,
$$\limsup_{\ez\to0}\int_0^T\int_K|v\ast \rho_\ez \,r_\ez+u\ast \rho_\ez \,s_\ez|\,dx\,dt<\infty.$$
Letting $\ez\to 0$, we obtain that
$\frac{\partial\, (u\,v)}{\partial t}+b\cdot \nabla (u\,v)+(c_1+c_2)\,u \,v$
is a signed measure with finite total variation in $(0,T)\times K$. Then arguing as Step 2 and Step 3 of the proof of \cite[Tehorem 3.5]{a04}, we see that $u\,v$ is a solution of
$$\frac{\partial \,(u\,v)}{\partial t}+b\cdot \nabla (u\,v)+(c_1+c_2)\,u \,v=0.$$
The proof is completed.
\end{proof}

\noindent
With the aid of Theorem \ref{renormalized-bv}, we next outline the proof of Theorem \ref{main-bv}, which is similar to that of Theorem \ref{main}.

\begin{proof}[Proof of Theorem \ref{main-bv}]
The proof of existence is rather standard, and is similar to that of Proposition \ref{p-exist}, which will be omitted. Uniqueness follows by combining the following steps which are analogues of Propositions \ref{p-exist}, \ref{p-unique} and \ref{3-energy}.\\
\\
{\bf Step 1.} In this step, we show that if $p\in [1,\infty)$ and $c\in L^1(0,T;L^\infty)$, then for each $u_0\in L^\infty\cap L^p$ there is a unique weak solution $u\in L^\infty(0,T;L^p\cap L^\infty )$ of the transport problem
$$
\begin{cases}
\dfrac{\partial u}{\partial t}+b\cdot \nabla u+c\,u=0 & (0,T)\times\R^n,\\
 u(0,\cdot)=u_0 &\R^n.
\end{cases}$$
The proof of existence is the same as that of Proposition \ref{p-exist}.\\
\\
{\bf Step 2.}
In this step, we show that, if $c\in L^1(0,T;L^\infty )$, then for each $u_0\in  L^2 \cap L^\infty $, there is at most one weak solution $u\in L^\infty(0,T;L^2 \cap L^\infty )$ to the transport problem
$$
\begin{cases}
    \frac{\partial u}{\partial t}+b\cdot \nabla u+c\,u=0 &  (0,T)\times\R^n,\\
      u(0,\cdot)=u_0 &   \R^n.
 \end{cases}$$
For uniqueness, let us suppose the initial value $u_0\equiv 0$. For each $R>0$, let $\psi_R\in  \cC^\infty_c(\R^n)$ be a cut-off function as in \eqref{cutoff}. By using Theorem \ref{renormalized-bv} and integrating over time and space, we see that
\begin{eqnarray*}
&&\int_{\R^n}|u(t,x)|^2\,\psi_R(x)\,dx=\int^t_0\int_{\R^n}\left(\div  b-p\,c\right)\,|u|^2\,\psi_R\,dx\,ds+\int_0^t\int_{\R^n} b\cdot \nabla \psi_R\,|u|^2\,dx\,ds.
\end{eqnarray*}
Then the rest proof is the same as that of Proposition \ref{p-unique}.\\
\\
{\bf Step 3.} In this step, we show that if $u\in L^\infty(0,T;L^\infty )$ is a solution to the transport equation
$$
\begin{cases}
\dfrac{\partial u}{\partial t}+b\cdot \nabla u=0 & (0,T)\times\R^n,\\
 u(0,\cdot)=0 &\R^n.
\end{cases}$$
then $u\in L^\infty(0,T;L^2)$. To this end, we write $\div \,b=B_{1}+B_{2},$ where $B_1\in L^1(0,T;\Exp(\frac{L}{\log L}))$ and $B_2\in L^1(0,T;L^\infty )$. Now, we consider the following backward transport equation, given as
$$
\begin{cases}
    \dfrac{\partial v}{\partial t}+b\cdot \nabla v+B_2 v=0 & (0,T)\times \R^n,\\
      v(T_0,\cdot)= \chi_K\,u(T_0,\cdot)  &   \R^n,
      \end{cases}
$$
where $T_0\in (0,T]$, and $K$ is an arbitrary compact subset in $\R^n$. By Step 1, we see that the above admits a solution $v\in L^\infty(0,T_0,L^1\cap L^\infty)$, since $\chi_K\,u(T_0,\cdot)$ belongs to $L^1\cap L^\infty$. By Proposition \ref{commutator-bv-2}, we know that the product $u\,v$ satisfies
$$\dfrac{\partial (u\,v)}{\partial t}+b\cdot \nabla (u\,v)+B_2\,u\,v=0 \ \mathrm{in }\ (0,T)\times\R^n .$$
For each $R>0$, let $\psi_R\in  \cC^\infty_c(\R^n)$ be a cut-off function as before in \eqref{cutoff}. Then we deduce that
\begin{eqnarray*}
0&&=\int_0^{T_0}\int_{\R^n} \left(\frac{\partial (u\,v)}{\partial t}+b\cdot \nabla (u\,v)+B_2\,u\,v\right)\psi_R\,dx\,dt\\
&&=\int_{\R^n} u(T_0,x)^2\,\chi_K(x)\,\psi_R(x)\,dx+\int_0^{T_0}\int_{\R^n} \left(u\,v\psi_R\,B_2-u\, v\,\psi_R¼, \div b-u\, v\, b\cdot \nabla \psi_R\right)\,dx\,dt\\
&&=\int_{\R^n} u(T_0,x)^2\,\chi_K(x)\,\psi_R(x)\,dx-\int_0^{T_0}\int_{\R^n} \left(u\,v\,\psi_R\,B_1+u \,v\, b\cdot \nabla \psi_R\right)\,dx\,dt,
\end{eqnarray*}
which implies that
\begin{eqnarray*}
\int_{\R^n} u^2(T_0,x)\chi_K(x)\psi_R(x)\,dx\le \int_0^{T_0}\int_{\R^n} |u v B_1|\psi_R+|u v| |b\cdot \nabla \psi_R|\,dx\,dt.
\end{eqnarray*}
Once more, the rest of the proof is the same of Proposition \ref{3-energy}.\\
\\
{\bf Step 4.} In this step, we finish the proof of the theorem. If $u\in L^\infty(0,T;L^\infty)$ is a solution of  \eqref{tran-eq} with initial value $u_0=0$, then from Step 3  we know that $u\in L^\infty(0,T;L^2\cap L^\infty)$. Using Step 2, we see that such a solution $u$ must be zero, which completes the proof.
\end{proof}

\section{Counterexamples}

\noindent
In this section, we give the proof of Theorem \ref{counter}, i.e., we wish to show that the condition
$$\div \,b\in L^1\left(0,T;\Exp\left(\frac{L}{\log^\gamma L}\right)\right)\hspace{1cm}\text{for some }\gamma>1$$
is not enough to guarantee the uniqueness. We only give the example in $\R^2$, which easily can be generalized to higher dimensions. \\
\\
Let us begin with recalling an example from DiPerna-Lions \cite{dl89}. Let $K$ be a Cantor set in $[0,1]$, let $g\in  \cC^\infty(\R)$ be such that $0\le g<1$ on $\R$, and $g(x)=0$ if and only if $x\in K$. For all $x\in\R$, we set
$$f(x):=\int_0^xg(t)\,dt.$$
Since $0<g(x)<1$ at points $x\in\R\setminus K$, we see that $f$ is a $\cC^\infty$ homeomorphism from $\R$ into itself.\\
\\
Denote by $\mathcal{M}$ the set of atom-free, nonnegative, finite measures on $K$. For any measure $m \in \mathcal{M}$, the equation
$$f_m(x+m([K\cap [0,x]]))=f(x),\quad x\in\R$$
defines a function $f_m:\R\to\R$. Indeed, $f_m$ is a continuously
differentiable homeomorphism whose derivative may vanish on a set of
positive length, however, using integration by substitution we still have
$$\int_\R  v(t)\,dt = \int_\R v(f_m(s))\,f_m'(s)\,ds$$
whenever $v$ is continuous and compactly supported. One now sets
$$b(x)=(1,f'(f^{-1}(x_2))),\quad \forall\,x=(x_1,x_2)\in\R^2.$$
It follows from \cite{dl89} that, for every fixed $m\in\mathcal M$, the function
$$X_m(t,x)=(x_1+t,f_m(t+f_m^{-1}(x_2))),\quad \forall\, t\in\R, \quad x=(x_1,x_2)\in\R^2,$$
satisfies
$$\frac{\partial}{\partial t}X_m(t,x)=b(X_m(t,x)).$$
We proceed now to prove Theorem \ref{counter}.

\begin{proof}[Proof of Theorem \ref{counter}] We divide the proof into the following four steps.
Based on the example of DiPerna-Lions \cite{dl89}, the main points left are to construct an
explicit smooth function $g$ and to show that $u_0$ composed with the flow is a distributional solution.\\
\\
{\bf Step 1: A minor modification of DiPerna-Lions' vector field \cite{dl89}.}
We start with $K$, $f$ and $f_m$ as introduced above. Let $\phi\in  \cC^\infty_c(\R)$ be such that $0\le\phi\le 1$ and $\supp\phi\subset[-1,2]$, which equals one on $[0,1]$. We choose the vector field $\tilde b$ as
$$\tilde b(x):=(0,\phi(x_1)f'(f^{-1}(x_2))),\hspace{.3cm}x=(x_1,x_2)\in \R^2.$$
As $g\in  \cC^\infty(\R)$ and $0\le g<1$, it follows readily that $\tilde b\in L^1(0,T;W^{1,1}_{loc}(\R^2))$ and
$$|\tilde b(x)|\in L^\infty(0,T;L^\infty(\R^2)).$$
For each $m\in \mathcal{M}$, let
$$\widetilde X_m(t,x)=(x_1,f_m(t\,\phi(x_1)+f_m^{-1}(x_2))),\quad  t\in\R, \quad x=(x_1,x_2)\in\R^2.$$
Then we see that
$$\frac{\partial}{\partial t}\widetilde X_m(t,x)=(0,\phi(x_1)f_m'(t\,\phi(x_1)+f_m^{-1}(x_2)))=(0,\phi((\widetilde X_m)_1)f_m'(f_m^{-1}((\widetilde X_m)_2)).$$
From \cite{dl89}, we know that, for each $t\in\R$,
$$f'_m(f_m^{-1}(t))=f'(f^{-1}(t)),$$
which, together with the above equality, yields that
$$\frac{\partial}{\partial t}\widetilde X_m(t,x)=(0,\phi((\widetilde X_m)_1)f'(f^{-1}((\widetilde X_m)_2))=\tilde b(\widetilde X_m).$$
Hence, the initial value problem $\dot{X}=\tilde b(X), X(0)=x$ admits $X(t)=\widetilde X_m(t,x)$ as a solution, for every fixed $m\in\mathcal{M}$. \\
\\
{\bf Step 2: Construction of a precise function $g$.} In order to check the integrability of $\div\tilde{b}$, we need to describe $g$ explicitly. To do this, we now fix $K$ to be a one third Cantor set on $[0,1]$. By $\{C_{k_j}\}_{j=1}^{2^{k-1}}$ we denote the collection of open sets removed in the $k$-th generation,
and $\{y_{k_j}\}_{j=1}^{2^{k-1}}$ be their centers.
For each $C_{k_j}$ we associate it with a smooth function as
$$\psi_{k_j}(x):=
\left\{ \begin{array}{ll}
   \exp\left\{-\exp\left\{\exp\left\{\frac{1}{\frac{1}{(2\cdot 3^k)^2}-(x-y_{k_j})^2}\right\}\right\}\right\} & \ x\in C_{k_j};\\
     0 & \ x\in \R\setminus C_{k_j}.
      \end{array}
      \right.
      $$
We next choose the function for $(-\infty,0)$ and $(1,\infty)$ as
$$g_1(x):=
\left\{ \begin{array}{ll}
   \exp\left\{-\exp\left\{\exp\left\{\frac{1}{x^{2}}\right\}\right\}\right\} & \ x\in (-\infty,0);\\
   \exp\left\{-\exp\left\{\exp\left\{\frac{1}{\left(x-1\right)^{2}}\right\}\right\}\right\}&\ x\in (1,\infty);\\
     0 & \ x\in \R\setminus [0,1].
      \end{array}
      \right.
      $$
It is obvious that $\psi_{k_j}, \,g_1\in  \cC^\infty(\R)$.  Now we set
 $$g(x)=\sum_{k\ge 1}\sum_{j=1}^{2^{k-1}}\psi_{k_j}(x)+g_1(x).$$
 It is readily seen that $g$ is smooth on $\R$, $g(x)=0$ for each $x\in
 K$ and $0<g<1$ for each $x\in\R\setminus K$.
 Therefore, $g$ satisfies the requirements from the example of DiPerna-Lions \cite{dl89} as recalled above.\\
\\
{\bf Step 3: Checking the integrability of $\div \tilde b$}. We will prove now that $\div\tilde{b}\in L^1(0,T;\Exp(\frac{L}{\log^\gamma L}))$ whenever $\gamma>1$.  Notice that, for $1<\gamma_1<\gamma_2<\infty$, it holds that
$$\exp\left(\frac{t}{(\log^+t)^{\gamma_2}}\right)\le \exp\left(\frac{t}{(\log^+t)^{\gamma_1}}\right)$$
for each $t\geq 0$. Therefore, we only need to show that $\div \tilde b\in L^1(0,T; \Exp(\frac{L}{\log^\gamma L}))$ for each $\gamma>1$ close to one. Let us fix  $\gamma\in (1,2)$.\\
\\
Notice that the function $t\mapsto \frac{t}{(\log^+t)^\gamma}$ is not monotonic. Indeed, it is increasing on $[0,e]$ and $[e^\gamma,\infty)$ and decreasing on $[e,e^\gamma]$. However, it is not hard to see that if $0<t<s<\infty$, then
$$\frac{t}{(\log^+t)^\gamma}\le \frac{e\gamma^\gamma}{e^\gamma}\frac{s}{(\log^+s)\gamma}.$$
A direct calculation shows that
$$\div \,\tilde{b}(x)=\phi(x_1)\frac{g'(f^{-1}(x_2))}{g(f^{-1}(x_2))},\quad\forall x=(x_1,x_2)\in\R^2.$$
Recall that  $\phi\in  \cC^\infty_c(\R)$, $0\le\phi\le 1$, $\supp
\phi\subset[-1,2]$, and $\phi=1$ on $[0,1]$. Therefore,
\begin{equation}\label{4.1}
\div \,\tilde{b}(x)\equiv 0, \quad \forall x\in (\R\setminus(-1,2))\times \R,
\end{equation}
and
\begin{eqnarray}\label{4.2}
&&\int_{\R^2}\left[\exp\left\{
\frac{C\left|\div \,\tilde{b}(x)\right|}
{\left[\log^+\left(\left|C\div \,\tilde{b}(x)\right|\right)\right]^\gamma}\right\}-1\right]\,dx\nonumber\\
&&\quad=\int_{\R^2}\left[\exp\left\{
\frac{C\left|\phi(x_1)\frac{g'(f^{-1}(x_2))}{g(f^{-1}(x_2))}\right|}
{\left[\log^+\left(\left|C\phi(x_1)\frac{g'(f^{-1}(x_2))}{g(f^{-1}(x_2))}\right|\right)\right]^\gamma}\right\}-1\right]
\,dx_1\,dx_2\nonumber\\
&&\quad=
\int_{[-1,2]\times \R}\left[\exp\left\{
\frac{C\left|\phi(x_1)\frac{g'(t)}{g(t)}\right|}
{\left[\log^+\left(\left|C\phi(x_1)\frac{g'(t)}{g(t)}\right|\right)\right]^\gamma}\right\}-1\right]g(t)\,dx_1\,dt\nonumber\\
&&\quad\le \int_{\R}3\left[\exp\left\{
\frac{Ce^{1-\gamma}\gamma^\gamma\left|\frac{g'(t)}{g(t)}\right|}
{\left[\log^+\left(\left|C\frac{g'(t)}{g(t)}\right|\right)\right]^\gamma}\right\}-1\right]g(t)\,dt.
\end{eqnarray}
By the above inequality, in order to show
$\div \tilde b\in L^1(0,T;\Exp(\frac{L}{\log^\gamma L}))$, it is sufficient to show that
\begin{eqnarray}\label{4.3}
\int_{\R}\left[\exp\left\{
\frac{Ce^{1-\gamma}\gamma^\gamma\left|\frac{g'(t)}{g(t)}\right|}
{\left[\log^+\left(\left|C\frac{g'(t)}{g(t)}\right|\right)\right]^\gamma}\right\}-1\right]g(t)\,dt<\infty,
\end{eqnarray}
for some $C>0$. \\
%
%
\\
{\bf Claim 1:} For $A=\frac{(\gamma-1)^2}{2^3}$, one has
\begin{eqnarray}\label{4.4}
\int_{\R\setminus[-1,2]}\left[\exp\left\{
\frac{A e^{1-\gamma}\gamma^\gamma\left|\frac{g'(t)}{g(t)}\right|}
{\left[\log^+\left(\left|A \frac{g'(t)}{g(t)}\right|\right)\right]^\gamma}\right\}-1\right]g(t)\,dt<\infty.
\end{eqnarray}
By symmetry of the function $g$ on $(-\infty,-1)\cup (2,\infty)$ and the fact $0\le g<1$, we only need to show that
\begin{equation}\label{4.5}
\int_{(-\infty,-1)}\left[\exp\left\{
\frac{A e^{1-\gamma}\gamma^\gamma\left|\frac{g'(t)}{g(t)}\right|}
{\left[\log^+\left(\left|A e^{1-\gamma}\gamma^\gamma\frac{g'(t)}{g(t)}\right|\right)\right]^\gamma}\right\}-1\right]\,dt<\infty.
\end{equation}
By noticing that
$$\frac{g'(t)}{g(t)}=\exp\left\{\exp\left\{\frac{1}{t^2}\right\}\right\}\exp\left\{\frac{1}{t^2}\right\}\frac{2}{t^3},\quad \mathrm{if}\, t\in (-\infty,-1),$$
and
$$ \log^+\left(\left|A \frac{g'(t)}{g(t)}\right|\right)\ge 1,$$
we conclude by using the Taylor expansion that
\begin{eqnarray*}
\int_{(-\infty,-1)}\left[\exp\left\{
\frac{A e^{1-\gamma}\gamma^\gamma\left|\frac{g'(t)}{g(t)}\right|}
{\left[\log^+\left(\left|A \frac{g'(t)}{g(t)}\right|\right)\right]^\gamma}\right\}-1\right]\,dt&&\le
\int_{(-\infty,-1)}\left[\exp\left\{
\frac{\widetilde A }{t^3}\right\}-1\right]\,dt\\
&&\le \int_{(-\infty,-1)} \sum_{l=1}^\infty \frac{(\widetilde A )^{l}}{l!}\frac{1}{t^{3l}}\,dt= \sum_{l=1}^\infty \frac{(\widetilde A )^{l}}{l!}\frac{1}{3l-1}<\infty,
\end{eqnarray*}
where $\widetilde A =2A e^{1-\gamma}\gamma^\gamma \exp\{e^{1+e}\}$.
This implies \eqref{4.5} and hence, \eqref{4.4} holds.\\
\\
{\bf Claim 2.} If $A $ is as above, then
\begin{eqnarray}\label{4.6}
\int_{[-1,2]}\left[\exp\left\{
\frac{A e^{1-\gamma}\gamma^\gamma\left|\frac{g'(t)}{g(t)}\right|}
{\left[\log^+\left(\left|A \frac{g'(t)}{g(t)}\right|\right)\right]^\gamma}\right\}-1\right]g(t)\,dt<\infty.
\end{eqnarray}
Since $0\le g<1$, the above inequality will follow from
\begin{eqnarray}\label{4.7}
\int_{[-1,2]}\exp\left\{
\frac{A e^{1-\gamma}\gamma^\gamma\left|\frac{g'(t)}{g(t)}\right|}
{\left[\log^+\left(\left|A \frac{g'(t)}{g(t)}\right|\right)\right]^\gamma}\right\}g(t)\,dt<\infty.
\end{eqnarray}
Notice that, for each $x\in C_{k_j}$,
\begin{eqnarray*}
\frac{g'(x)}{g(x)}=-\exp\left\{\exp\left\{\frac{1}{\frac{1}{(2\cdot 3^k)^2}-(x-y_{k_j})^2}\right\}\right\}\exp\left\{\frac{1}{\frac{1}{(2\cdot 3^k)^2}-(x-y_{k_j})^2}\right\}
\frac{2(x-y_{k_j})}{[\frac{1}{(2\cdot 3^k)^2}-(x-y_{k_j})^2]^2},
\end{eqnarray*}
and hence,
\begin{eqnarray*}
\frac{|g'(x)|}{g(x)}&&\le \exp\left\{\exp\left\{\frac{1}{\frac{1}{(2\cdot 3^k)^2}-(x-y_{k_j})^2}\right\}\right\}\exp\left\{\frac{1}{\frac{1}{(2\cdot 3^k)^2}-(x-y_{k_j})^2}\right\}
\frac{\frac{1}{ 3^k}}{[\frac{1}{(2\cdot 3^k)^2}-(x-y_{k_j})^2]^2}\\
&&\le \exp\left\{\exp\left\{\frac{1}{\frac{1}{(2\cdot 3^k)^2}-(x-y_{k_j})^2}\right\}\right\}\exp\left\{\frac{\frac{1+\gamma}{2}}{\frac{1}{(2\cdot 3^k)^2}-(x-y_{k_j})^2}\right\}
\frac{2}{3^k}\frac{2^2}{(\gamma-1)^2}.
\end{eqnarray*}
Notice that  the function  $\frac{t}{(\log^+t)^\gamma}$ is increasing on $(0,e)\cup(e^\gamma,\infty)$
and decreasing on $(e,e^\gamma)$.
If $A \frac{|g'(x)|}{g(x)}<e^\gamma$,
then
\begin{equation}\label{g-est-1}
\frac{A \frac{|g'(x)|}{g(x)}}{\left[\log^+\left(\left|A \frac{g'(x)}{g(x)}\right|\right)\right]^\gamma}\le e^\gamma;
\end{equation}
while for $A \frac{|g'(x)|}{g(x)}\ge e^\gamma$, by the choice of $A $, we have
\begin{eqnarray}\label{g-est-2}
\frac{A \frac{|g'(x)|}{g(x)}}{\left[\log^+\left(\left|A \frac{g'(x)}{g(x)}\right|\right)\right]^\gamma}
&&\le\frac{A  \exp\left\{\exp\left\{\frac{1}{\frac{1}{(2\cdot 3^k)^2}-(x-y_{k_j})^2}\right\}\right\}
\exp\left\{\frac{\frac{1+\gamma}{2}}{\frac{1}{(2\cdot 3^k)^2}-(x-y_{k_j})^2}\right\}
\frac{2}{3^k}\frac{2^2}{(\gamma-1)^2}}{\left[\exp\left\{\frac{1}{\frac{1}{(2\cdot 3^k)^2}-(x-y_{k_j})^2}\right\}
+\frac{\frac{1+\gamma}{2}}{\frac{1}{(2\cdot 3^k)^2}-(x-y_{k_j})^2}+\log(A \frac{2}{3^k}\frac{2^2}{(\gamma-1)^2})\right]^\gamma}\nonumber\\
&&\le \frac{1}{3^k}\frac{\exp\left\{\exp\left\{\frac{1}{\frac{1}{(2\cdot 3^k)^2}-(x-y_{k_j})^2}\right\}\right\}
\exp\left\{\frac{\frac{1+\gamma}{2}}{\frac{1}{(2\cdot 3^k)^2}-(x-y_{k_j})^2}\right\}
}{\left[\exp\left\{\frac{1}{\frac{1}{(2\cdot 3^k)^2}-(x-y_{k_j})^2}\right\}+(2\cdot 3^k)^2+\log(\frac{1}{3^k})
\right]^\gamma}\nonumber\\
&&\le  \frac{1}{3^k}
\exp\left\{\exp\left\{\frac{1}{\frac{1}{(2\cdot 3^k)^2}-(x-y_{k_j})^2}\right\}\right\}
\exp\left\{\frac{\frac{1-\gamma}{2}}{\frac{1}{(2\cdot 3^k)^2}-(x-y_{k_j})^2}\right\}\nonumber\\
&&\le  \frac{1}{3^k}
\exp\left\{\exp\left\{\frac{1}{\frac{1}{(2\cdot 3^k)^2}-(x-y_{k_j})^2}\right\}\right\}
\exp\left\{{2\cdot 3^{2k}(1-\gamma)}\right\}\nonumber\\
&&\le  \frac{1}{3}
\exp\left\{\exp\left\{\frac{1}{\frac{1}{(2\cdot 3^k)^2}-(x-y_{k_j})^2}\right\}\right\}.
\end{eqnarray}
Combining \eqref{g-est-1} and \eqref{g-est-2}, we deduce that for each $x\in C_{k_j}$,
\begin{eqnarray}\label{4.8}
\exp\left\{\frac{A e^{1-\gamma}\gamma^\gamma\frac{|g'(x)|}{g(x)}}{\left[\log^+\left(\left|A \frac{g'(x)}{g(x)}\right|\right)\right]^\gamma}\right\}g(x)&&
\le \exp\left\{e\gamma^\gamma-\left[1-\frac{e^{1-\gamma}\gamma^\gamma}{3}\right]
\exp\left\{\exp\left\{\frac{1}{\frac{1}{(2\cdot 3^k)^2}-(x-y_{k_j})^2}\right\}\right\}\right\}\nonumber\\
&&\le \exp\left\{e\gamma^\gamma\right\},
\end{eqnarray}
since by assumption $1<e^{1-\gamma}\gamma^\gamma<e$.
Indeed, from \eqref{g-est-2} and \eqref{4.8}, we can further see that
the function
$$\exp\left\{\frac{A e^{1-\gamma}\gamma^\gamma\frac{|g'(x)|}{g(x)}}{\left[\log^+\left(\left|A \frac{g'(x)}{g(x)}\right|\right)\right]^\gamma}\right\}g(x)$$
is smooth  on $\overline{C_{k_j}}$, and equals $0$ on the boundary of $C_{k_j}$.\\
\\
On the other hand, notice that for each $x\in [-1,0)$, it holds that
\begin{eqnarray*}
\frac{|g'(x)|}{g(x)}=\exp\left\{\exp\left\{\frac{1}{x^2}\right\}\right\}\exp\left\{\frac{1}{x^2}\right\}
\frac{2}{|x|^3}\le \exp\left\{\exp\left\{\frac{1}{x^2}\right\}\right\}\exp\left\{\frac{1+\gamma}{2x^2}\right\}\frac{2\cdot 2^2}{(\gamma-1)^2}.
\end{eqnarray*}
If $A \frac{|g'(x)|}{g(x)}<e^\gamma$,
then
\begin{equation*}
\frac{A \frac{|g'(x)|}{g(x)}}{\left[\log^+\left(\left|A \frac{g'(x)}{g(x)}\right|\right)\right]^\gamma}\le e^\gamma,
\end{equation*}
while for $A \frac{|g'(x)|}{g(x)}\ge e^\gamma$,
\begin{eqnarray*}
\exp\left\{\frac{A e^{1-\gamma}\gamma^\gamma\frac{|g'(x)|}{g(x)}}{\left[\log^+\left(\left|A \frac{g'(x)}{g(x)}\right|\right)\right]^\gamma}\right\}g(x)&&
\le \exp\left\{e^{1-\gamma}\gamma^\gamma\frac{\exp\left\{\exp\left\{\frac{1}{x^2}\right\}\right\}\exp\left\{\frac{1+\gamma}{2x^2}\right\}}
{\exp\left\{\frac{\gamma}{x^2}\right\}}\right\}g(x)\\
&&\le  \exp\left\{\exp\left\{\exp\left\{\frac{1}{x^2}\right\}\right\}
\left[e^{1-\gamma}\gamma^\gamma\exp\left\{\frac{1-\gamma}{2x^2}\right\}-1\right]\right\}\\
&&\le  \left\{ \begin{array}{ll}
    1, & \ x\in (-\frac{\sqrt{\gamma-1}}{2},0);\\
      \exp\left\{\exp\left\{\exp\left\{\frac{4}{\gamma-1}\right\}\right\}
\exp\left\{1+\frac{1-\gamma}{2}\right\}\right\}, & \ x\in [-1,-\frac{\sqrt{\gamma-1}}{2})
      \end{array}
      \right.\\
      &&\le \exp\left\{\exp\left\{1+\exp\left\{\frac{4}{\gamma-1}\right\}\right\}
\right\},
\end{eqnarray*}
since $1<\gamma<2$. The above two inequalities imply that
\begin{eqnarray*}
\exp\left\{\frac{A e^{1-\gamma}\gamma^\gamma\frac{|g'(x)|}{g(x)}}
{\left[\log^+\left(\left|A \frac{g'(x)}{g(x)}\right|\right)\right]^\gamma}\right\}g(x)
\le \exp\left\{\exp\left\{1+\exp\left\{\frac{4}{\gamma-1}\right\}\right\}
\right\}.
\end{eqnarray*}
and, similarly, for each $x\in (1,2]$,
\begin{eqnarray*}
\exp\left\{\frac{A e^{1-\gamma}\gamma^\gamma\frac{|g'(x)|}{g(x)}}
{\left[\log^+\left(\left|A \frac{g'(x)}{g(x)}\right|\right)\right]^\gamma}\right\}g(x)\le \exp\left\{\exp\left\{1+\exp\left\{\frac{4}{\gamma-1}\right\}\right\}
\right\}.
\end{eqnarray*}
Therefore, from these two estimates together with \eqref{4.8}, we conclude that
\begin{eqnarray*}
\int_{[-1,2]}\exp\left\{\frac{A e^{1-\gamma}\gamma^\gamma\frac{|g'(x)|}{g(x)}}{\left[\log^+\left(\left|C\frac{g'(x)}{g(x)}\right|\right)\right]^\gamma}\right\}g(x)\,dx
\le 2C(\gamma)+\sum_k\sum_{j=1}^{2^{k-1}}\frac{\exp\left\{e\gamma^\gamma\right\}}{3^k}<\infty,
\end{eqnarray*}
where $C(\gamma)=\exp\left\{\exp\left\{1+\exp\left\{\frac{4}{\gamma-1}\right\}\right\}
\right\}$. This,
together with \eqref{4.7}, yields \eqref{4.6}. Combining the inequalities \eqref{4.4} and \eqref{4.6} yields
\begin{eqnarray*}
&&\int_{\R^2}\left[\exp\left\{
\frac{A \left|\div \,\tilde{b}(x)\right|}
{\left[\log^+\left(\left|A \div \,\tilde{b}(x)\right|\right)\right]^\gamma}\right\}-1\right]\,dx<\infty,
\end{eqnarray*}
via \eqref{4.2} and \eqref{4.3}, where $A =\frac{(\gamma-1)^2}{2^3}$.
Therefore, Step 3 is completed.\\
\\
\textbf{Step 4: Constructing infinitely many solutions to a transport equation}.
Let us fix a function  $u_0\in \cC^\infty_c(\R^2)$ which does not identically vanish on $(0,1)\times (0,\infty)$.
{ Then the set
$$\left\{u_m(t,x)=u_0(\widetilde X_m(t,x))\ \big|\, m\in \mathcal{M}\right\}$$
contains infinite many functions. Let us show that, }for each $m\in \mathcal{M}$,  the function $u_m(t,x)$
is a solution to the initial value problem
\begin{equation}\label{minus}
\begin{cases}
\dfrac{\partial u}{\partial t}-\tilde b\cdot \nabla u=0 & (0,T)\times\R^2,\\
      u(0,\cdot)=u_0 &  \R^2.
   \end{cases}\end{equation}
For proving this, let us choose a test function $\varphi\in \cC^\infty_c([0,T)\times\R^2)$. We fix a real number $t\in[0,T)$, and $h$ sufficiently small. We have
\begin{eqnarray*}
&&\frac
1h\int_{\R^2}\left(u_m(t+h,x)-u_m(t,x)\right)\varphi(t,x)\,dx\\
&&=\frac
1h\int_{\R^2}\left(u_0(x_1,f_m((t+h)\phi(x_1)+f_m^{-1}(x_2)))-u_0(x_1,f_m(t\phi(x_1)+f_m^{-1}(x_2)))\right)\varphi(t,x_1,x_2)\,dx_1\,dx_2\\
&&=\frac
1h\int_{\R^2}\left(u_0(x_1,f_m((t+h)\phi(x_1)+y_2))-u_0(x_1,f_m(t\phi(x_1)
+y_2))\right)\varphi(t,x_1,f_m(y_2))f_m'(y_2)\,dx_1\,dy_2,
\end{eqnarray*}
where in the last equality we used the change of variables $x_2=f_m(y_2)$. Indeed,
it is clear that $f_m$ is a continuously differentiable homeomorphism
on $\R$ (since $g(t)\to e^{-e}$ as $|t|\to\infty$). Therefore, the classical integration by substitution
$$\int_\R v(t)\,dt = \int_\R v(f_m(s))\,f'_m(s)\,ds $$
is legitimate for any continuous $v$. Now, since $u_0$ is compactly supported and smooth, and
$f_m$ is continuously differentiable, we can let $h$ tend to zero in
the above equality. We obtain
\begin{eqnarray}
&&\int_{\R^2}\frac{\partial u_m(t,x)}{\partial t}\,\varphi(t,x)\,dx\label{a1}\\
&&=\int_{\R^2}\frac{\partial u_0}{\partial
z}(x_1,z)\bigg|_{z=f_m(t\phi(x_1)+y_2)}\,f_m'(t\phi(x_1)+y_2)\,\phi(x_1)\,\varphi(t,x_1,f_m(y_2))\,f_m'(y_2)\,dx_1\,dy_2.\nonumber
\end{eqnarray}
\noindent
On the other hand, by noticing that
$$
\tilde
b(x)=(0,\phi(x_1)f'(f^{-1}(x_2)))=(0,\phi(x_1)f_m'(f_m^{-1}(x_2))),$$
we get $\tilde b\in W^{1,q}_{loc}(\R^2)$ for any finite $q$. Indeed, one clearly has
$$\frac{\partial \tilde b}{\partial x_1}\,(x_1,x_2)=(0,\phi'(x_1)f'(f^{-1}(x_2))),$$
which is continuous and bounded. Also, for each $\gamma>1$ we see that
$$\frac{\partial \tilde b}{\partial x_2}\,(x_1,x_2)=(0,\phi(x_1)\frac{g'(f^{-1}(x_2))}{g(f^{-1}(x_2))})$$
which belongs to $\Exp\left(\frac{L}{\log^\gamma L}\right)$ (see Step 3). As a consequence, we have
$$
\aligned
-\int_{\R^2}&u_m(t,x)\,\mathrm{div}(\tilde b\varphi)(t,x)\,dx\\
&=-\int_{\R^2}u_0\left(\tilde{X}_m(t,x_1,x_2))\right)\,\phi(x_1)\,\frac{\partial}{\partial
z}\bigg(f'(f^{-1}(z))\varphi(t,x_1,z)\bigg)\bigg|_{z=x_2}\,dx_1\,dx_2\\
&=-\int_{\R^2}u_0\left(\tilde{X}_m(t,x_1,f_m(y_2)))\right)\,\phi(x_1)\,\frac{\partial
}{\partial
z}\bigg(f'(f^{-1}(z))\,\varphi(t,x_1,z)\bigg)\bigg|_{z=f_m(y_2)}\,f'_m(y_2)\,dx_1\,dy_2\\
&=-\int_{\R^2}u_0\left(\tilde{X}_m(t,x_1,f_m(y_2)))\right)\,\phi(x_1)\,\frac{\partial
}{\partial
z}\bigg(f'(f^{-1}(f_m(z)))\,\varphi(t,x_1,f_m(z))\bigg)\bigg|_{z=y_2}\,dx_1\,dy_2\\
&=-\int_{\R^2}u_0\left(\tilde{X}_m(t,x_1,f_m(y_2)))\right)\,\phi(x_1)\,\frac{\partial
}{\partial
z}\bigg(f'_m(z)\,\varphi(t,x_1,f_m(z))\bigg)\bigg|_{z=y_2}\,dx_1\,dy_2\\
&=\int_{\R^2}\frac{\partial u_0 }{\partial
z}\left(x_1,z\right)\bigg|_{z=f_m(t\phi(x_1)+y_2)}f'_m(t\phi(x_1)+y_2)
\phi(x_1)\varphi(t,x_1,f_m(y_2))f_m'(y_2)\,dx_1\,dy_2,
\endaligned$$
This, together with \eqref{a1}, implies that the equality
\begin{eqnarray*}
&&\int_{\R^2}\frac{\partial u_m(t,x)}{\partial
t}\varphi(t,x)\,dx=-\int_{\R^2}u_m(t,x)\,\mathrm{div}(\tilde
b\varphi)(t,x)\,dx
\end{eqnarray*}
holds for each $\varphi\in \cC^\infty([0,T)\times\R^2)$ with compact support in $[0,T)\times \R^2$, at any time $t$. Integrating over time we obtain
\begin{eqnarray*}
&&-\int_0^T\int_{\R^2}u_m(t,x)\,\frac{\partial \varphi(t,x)}{\partial
t}\,dt\,dx- \int_{\R^2}u_m(0,x)\, \varphi(0,x)\,dx=-\int_{\R^2}u_m(t,x)\,\mathrm{div}(\tilde
b\varphi)(t,x)\,dx
\end{eqnarray*}
as desired. Thus $u_m$ is, for every $m\in\mathcal{M}$, a weak solution to \eqref{minus}, and therefore Step $4$ follows. The proof of the Theorem  \ref{counter} is concluded.
\end{proof}

\noindent
\textbf{Acknowledgments}.
The authors are grateful to Gianluca Crippa for interesting remarks which improved the paper.
Albert Clop, Joan Mateu and Joan Orobitg were partially supported by Generalitat de Catalunya (2014SGR75) and Ministerio de Econom\'\i a y Competitividad (MTM2013-44699). Albert Clop was partially supported by the Programa Ram\'on y Cajal. Renjin Jiang was partially supported by National Natural Science Foundation of China (NSFC 11301029). All authors were partially supported by Marie Curie Initial Training Network MAnET (FP7-607647).

\noindent
\textit{Albert Clop, Joan Mateu and Joan Orobitg}

\vspace{0.1cm}
\noindent
Departament de Matem\`atiques, Facultat de Ci\`encies,\\
Universitat Aut\`onoma de Barcelona\\
08193 Bellaterra (Barcelona), CATALONIA.

\vspace{0.3cm}

\noindent
\textit{Renjin Jiang}

\vspace{0.1cm}
\noindent
School of Mathematical Sciences, Beijing Normal University, Laboratory of Mathematics and Complex Systems,
Ministry of Education, 100875, Beijing, CHINA

and

\noindent
Departament de Matem\`atiques, Facultat de Ci\`encies,\\
Universitat Aut\`onoma de Barcelona\\
08193 Bellaterra (Barcelona), CATALONIA.

\vspace{0.2cm}
\noindent{\it E-mail addresses}:\\
\texttt{albertcp@mat.uab.cat}\\
\texttt{jiang@mat.uab.cat\,\&\,rejiang@bnu.edu.cn}\\
\texttt{mateu@mat.uab.cat}\\
\texttt{orobitg@mat.uab.cat}
\end{document}